\newcommand{\leqnomode}{\tagsleft@true}
\newcommand{\reqnomode}{\tagsleft@false}
\newcommand{\lleqnomode}{\tagsleft@true\let\veqno\@@leqno}
\newcommand{\rreqnomode}{\tagsleft@false\let\veqno\@@eqno}
\theoremstyle{plain}
\newtheorem{thm}{Theorem}
\newtheorem{lem}[thm]{Lemma}
\newtheorem{claim}[thm]{Claim}
\theoremstyle{definition}
\newtheorem{rem}[thm]{Remark}
\newcounter{casenum}
\newcommand{\cB}{\mathcal{B}}
\newcommand{\cF}{\mathcal{F}}
\def\final{0}  
\def\iflong{\iffalse}
\newcommand{\krnote}[1]{{\color{red}[{\tiny \textbf{Krist{\'o}f:} \bf #1}]\marginpar{\color{red}*}}}
\newcommand{\kinote}[1]{{\color{blue}[{\tiny \textbf{Kitti:} \bf #1}]\marginpar{\color{blue}*}}}
\newcommand{\lnote}[1]{{\color{purple}[{\tiny \textbf{Lydia:} \bf #1}]\marginpar{\color{purple}*}}}
\newcommand{\krnote}[1]{}
\newcommand{\kinote}[1]{}
\newcommand{\lnote}[1]{}
\title{Inverse optimization problems with multiple weight functions}
\author{
Krist{\'o}f B{\'e}rczi\thanks{MTA-ELTE Momentum Matroid Optimization Research Group, Department of Operations Research, E\"otv\"os Lor\'and University, Budapest, Hungary.} \thanks{MTA-ELTE Egerv\'ary Research Group, Department of Operations Research, E\"otv\"os Lor\'and University, Budapest, Hungary. Email: \texttt{kristof.berczi@ttk.elte.hu, lmmendoza@protonmail.com}.} 
\and 
Lydia Mirabel Mendoza-Cadena\footnotemark[1] \footnotemark[2]
\and
Kitti Varga\footnotemark[1] \thanks{Alfréd Rényi Institute of Mathematics, Budapest, Hungary.} \thanks{Department of Computer Science and Information Theory, Budapest University of Technology and Economics, Hungary. Email: \texttt{vkitti@math.bme.hu}.}
}
\begin{document}
\maketitle

\begin{abstract}
We introduce a new class of inverse optimization problems in which an input solution is given together with $k$ linear weight functions, and the goal is to modify the weights by the same deviation vector $p$ so that the input solution becomes optimal with respect to each of them, while minimizing $\|p\|_1$. In particular, we concentrate on three problems with multiple weight functions: the inverse shortest $s$-$t$ path, the inverse bipartite perfect matching, and the inverse arborescence problems. Using LP duality, we give min-max characterizations for the $\ell_1$-norm of an optimal deviation vector. Furthermore, we show that the optimal $p$ is not necessarily integral even when the weight functions are so, therefore computing an optimal solution is significantly more difficult than for the single-weighted case. We also give a necessary and sufficient condition for the existence of an optimal deviation vector that changes the values only on the elements of the input solution, thus giving a unified understanding of previous results on arborescences and matchings.
\medskip

\noindent \textbf{Keywords:} inverse optimization, shortest path, bipartite matching, arborescence, min-max theorem
\end{abstract}

\section{Introduction}

In classical inverse optimization problems, we are given a feasible solution to an underlying optimization problem together with a linear weight function, and the goal is to modify the weights as little as possible so that the input solution becomes optimal. Formally, let $S$ be a finite ground set, $\mathcal{F} \subseteq 2^S$ be a collection of feasible solutions, $F \in \cF$ be an input solution, and $w \in \mathbb{R}^S$ be a weight function. We seek a `small' vector $p \in \mathbb{R}^S$, called \emph{deviation vector}, such that $F$ is a minimum weight member of $\mathcal{F}$ with respect to $w-p$. There may be various ways to measure the deviation of the new objective from the original one, the probably most natural ones being the $\ell_1$-, $\ell_2$-, and $\ell_\infty$-norms, or the weighted variants of those.

The problem admits numerous generalizations. In partial inverse optimization problems, instead of fixing an input solution, two subsets of the ground set are given that represent elements that required to be contained in and avoided by an optimal solution, respectively. It is worth mentioning that adding such constraints often results in computationally difficult problems \cite{gassner2009, li2016algorithm}, but identifying the borderline between tractable and intractable cases is a challenging question. Another natural extension is to impose lower and upper bounds for the coordinates of the desired deviation vector $p$ \cite{zhang2020inverse, guan2017inverse, yang2007some, mohaghegh2016inverse, demange2014introduction, mao1999inverse, liu2013weighted, tayyebi2019inverse, burton1997inverse}.

We propose yet another generalization that considers multiple underlying optimization problems at the same time. Instead of a single weight function, we are given $k$ weight functions $w_1,\dots,w_k$ together with an input solution $F$, and our goal is to find a single vector $p$ such that $F$ has minimum weight with respect to $w_i-p$ for all $i \in [k]$. Throughout the paper, we use the $\ell_1$-norm to measure the optimality of $p$.  In order to avoid confusion, we refer to solutions of the inverse optimization problem as \emph{feasible deviation vectors}, and simply as \emph{solutions} to the combinatorial structures that are solutions to the underlying optimization problem.

\paragraph{Previous work.}

Inverse optimization problems received substantial attention due to their theoretical and practical value. They appear naturally in diverse applications, such as system identification in seismic and medical tomography \cite{burton1992instance, burton1994use, nolet1987seismic, tarantola1987inverse, neumann1984inversion}, parameter selection to force a desired response in traffic modeling and network tolling \cite{dial1999minimal, dial2000minimal}, and portfolio selection and robust optimization \cite{iyengar2005inverse, varian2006revealed, ben1998robust, goldfarb2003robust}. Burton and Toint \cite{burton1992instance, burton1994use} were the first to discuss inverse problems from an optimization point of view, concentrating on the inverse shortest path problem with $\ell_2$-norm objective. Their initial work was followed by an extensive list of algorithms and complexity results for other combinatorial structures, including but not limited to spanning trees \cite{zhang1997algorithm, zhang2020inverse, guan2017inverse, yang2007some}, matchings \cite{zhang1999inverse, zhang1999solution, liu2003inverse, liu2013weighted}, arborescences \cite{hu1998strongly, mohaghegh2016inverse, frank2021inverse}, matroid intersection \cite{cai1997inverse, mao1999inverse}, and polyhedral optimization \cite{huang1999inverse}. For an early survey, see e.g. \cite{heuberger2004inverse}, while \cite{demange2010introduction} discusses recent developments. 

A great majority of papers on inverse optimization focus on developing fast combinatorial algorithms, but they do not provide a min-max characterization for the optimum value. Frank and Hajdu~\cite{frank2021inverse} gave a min-max formula for the minimum modification of the weight function in the inverse arborescence problem, as well as a conceptually simpler algorithm than previous ones. Their approach is based on a min-max result and a two-phase greedy algorithm of Frank~\cite{frank1979kernel} on kernel systems of directed graphs. In a recent work, Frank and Murota~\cite{frank2021discrete} developed a general min-max formula for the minimum of an integer-valued separable discrete convex function, where the minimum is taken over the set of integral elements of a box total dual integral polyhedron. Their framework covers and even extends a wide class of inverse combinatorial optimization problems. For example, the inverse arborescence problem with a single weight function fits in their framework, though the proofs in \cite{frank2021discrete} are not algorithmic.

It is worth mentioning that in the inverse arborescence problem, when the weight function $w$ takes integer values, the optimal $p$ can be chosen to be integral. Despite the extensive literature on inverse optimization, only few results are known when the desired deviation vector $p$ is required to be integral \cite{ben1998robust, bley2007inapproximability, ahmadian2018algorithms, frank2021inverse}. It is not difficult to come up with an example showing that the fractional and the integral optimal deviation vectors might be different. Even more, the two problems are rather different in nature: while finding a strongly polynomial time algorithm for the fractional inverse maximum weight perfect matching is a major open problem in inverse optimization \cite{liu2003inverse}, the integral version was shown to be difficult, see \cite{demange2010introduction}.

We focus on three types of combinatorial structures. The first one is the \emph{inverse shortest $s$-$t$ path problem}. In the single-weighted setting, the input consists of a directed graph $D=(V,A)$, two vertices $s,t \in V$, an $s$-$t$ path $P$, a conservative weight function $w:A\to\mathbb{R}$, and the goal is to find a $p:A\to\mathbb{R}$ such that $P$ becomes a minimum weight $s$-$t$ path with respect to $w-p$, and $\|p\|_1=\sum_{a\in A} |p(a)|$ is as small as possible. The problem and its variants were previously studied e.g. in \cite{demange2014introduction, burton1997inverse, zhang1999solution, ahmadian2018algorithms, daume2015correcting, yang2007some}.

The second problem is the \emph{inverse bipartite perfect matching problem}. The input of this problem is an undirected bipartite graph $G=(S,T;E)$, a perfect matching $M$, a weight function $w:E\to\mathbb{R}$, and the task is to find a $p:E\to\mathbb{R}$ such that $M$ becomes a minimum weight perfect matching with respect to the revised weight function $w-p$, and $\|p\|_1$ is as small as possible. The problem was considered before in \cite{demange2014introduction, huang1999inverse, zhang1999solution, ahmadian2018algorithms, daume2015correcting}. The generalization of the problem to non-bipartite graphs is also of interest, see \cite{liu2013weighted, tayyebi2019inverse, zhang1999inverse, liu2003inverse}.

Finally, in the \emph{inverse arborescence problem}, we are given a directed graph $D=(V,A)$, a spanning arborescence $F\subseteq A$ with root $r \in V$, a weight function $w:A\to\mathbb{R}$, and the goal is to find a $p:A\to\mathbb{R}$ such that $F$ becomes a minimum weight $r$-arborescence with respect to the revised weight function $w-p$, and $\|p\|_1$ is as small as possible. This problem was also analyzed before \cite{frank2021inverse, mohaghegh2016inverse, daume2015correcting}.

\paragraph{Our results.} We introduce the notion of inverse optimization problems with multiple weight functions. Such a problem is characterized by a five-tuple $(S, \cF, F, \{w_i\}_{i=1}^k,\| \cdot \|_1)$, where $S$ is a finite ground set, $\cF\subseteq 2^S$ is a collection of feasible solutions (not necessarily given explicitly), $F\in\cF$ is an input solution, $w_1,\dots,w_k \in \mathbb{R}^S$ are weight functions. A vector $p\in\mathbb{R}^S$ is called a \emph{feasible deviation vector} if $F$ is a minimum weight member of $\mathcal{F}$ with respect to $w_i-p$ for all $i \in [k]$. The goal is to find a feasible deviation $p\in\mathbb{R}^S$ minimizing $\|p\|_1$.

The problem is motivated by a question on dynamic pricing schemes in combinatorial markets, introduced by Cohen-Addad et al.~in \cite{cohen2016invisible}. A central open question is whether optimal social welfare is achievable through a dynamic pricing scheme when each valuation function is a weighted matroid rank function \cite{berczi2021market}. In such a case, the pricing problem reduces to a matroidal inverse optimization problem with multiple weight functions. The present paper focuses on problems that are closely related to the matroidal setting. 

Our main contributions are min-max characterizations for the $\ell_1$-norm of an optimal deviation vector in the inverse shortest $s$-$t$ path, bipartite perfect matchings, and arborescences problems with multiple weight functions. For $s$-$t$ paths and bipartite perfect matchings, we show that the problems can be solved using a linear programming based approach. For arborescences, we show how to determine the optimum value of $\|p\|_1$ through LP duality. However, in this case, finding an optimal deviation vector $p$ remains an intriguing open problem. Furthermore, we show that the optimal $p$ is not necessarily integral even when the weight functions are so. Due to this fact, obtaining purely combinatorial algorithms for determining an optimal deviation vector seems to be unlikely.

\medskip

The paper is organized as follows. After describing the preliminaries in Section~\ref{sec:preliminaries}, we analyze the characteristics of optimal deviation vectors in Section~\ref{sec:adequate}. We apply these results to the case of shortest $s$-$t$ paths, bipartite perfect matchings and arborescences in Sections~\ref{sec:inverse_path}, \ref{sec:bipartite} and  \ref{sec:arborescence}, respectively.  

\section{Preliminaries \label{sec:preliminaries}}

\paragraph{Basic notation.} We denote the sets of \emph{real}, \emph{nonnegative real}, \emph{integer}, and \emph{nonnegative integer} numbers by $\mathbb{R}$, $\mathbb{R}_+$, $\mathbb{Z}$, and $\mathbb{Z}_+$, respectively. For a positive integer $k$, we use $[k]:=\{1,\dots,k\}$. Given a ground set $S$ and subsets $X,Y\subseteq S$, the \emph{difference} of $X$ and $Y$ is denoted by $X-Y$. If $Y$ consists of a single element $y$, then $X-\{y\}$ 
is abbreviated by $X-y$.

Let $D=(V,A)$ be a \emph{directed graph} with vertex set $V$ and arc set $A$. We denote the numbers of vertices and edges by $n:=|V|$ and $m:=|A|$. For an arc $a=uv$, $u$ and $v$ are called the \emph{tail} and the \emph{head} of $a$, respectively. An arc $uv$ \emph{enters} a subset $Z$ of vertices if $v\in Z$, $u\notin Z$. For a subset $F \subseteq A$ of arcs, the \emph{set of arcs in $F$ entering} $Z$ is denoted by $\delta^-_F(Z)$, while the \emph{in-degree} of $Z$ in $F$ is $d^-_F(Z)=|\delta^-_F(Z)|$. Similarly, $uv$ \emph{leaves} $Z$ if $v\notin Z$, $u\in Z$. The \emph{set of arcs in $F$ leaving} $Z$ is denoted by $\delta^+_F(Z)$, while the \emph{out-degree} of $Z$ in $F$ is $d^+_F(Z)=|\delta^+_F(Z)|$. In all cases, the subscript $F$ is dismissed when $F$ consists of the whole arc set. For a family $\cF\subseteq 2^V$, a subset $L$ of arcs \emph{covers} $\cF$ if $d^-_L(Z)\geq 1$ for each $Z\in\cF$.

Let $G=(V,E)$ be an \emph{undirected graph} with vertex set $V$ and edge set $E$. Given a set $Z\subseteq V$, the \emph{set of edges going between $Z$ and $V-Z$} is denoted by $\delta(Z)$. The \emph{degree} $|\delta(Z)|$ of $Z$ is then denoted by $d(Z)$. In particular, for a vertex $v\in V$, $\delta(v)$ is the set of edges incident on $v$.

\paragraph{Shortest paths.} Let $D=(V,A)$ be a digraph, $s,t \in V$ be two vertices, and $w:A\to\mathbb{R}$ be a weight function. By a \emph{shortest $s$-$t$ path} we mean a directed path starting at $s$, ending at $t$, and having minimum $w$-weight. The weight function $w$ is called \emph{conservative} if there is no directed cycle of negative total weight. Duffin \cite{duffin1962} gave a min-max characterization for the length of a shortest $s$-$t$ path when the weight function is conservative.

\begin{thm}[Duffin] \label{thm:Duffin}
 Let $D=(V,A)$ be a digraph, $s,t \in V$ be two vertices, and $w: A \to \mathbb{R}$ be a conservative weight function. Then 
 \begin{eqnarray*}
  &\min \big\{ w(P) \bigm|\text{$P \subseteq A$ is an $s$-$t$ path} \, \big\}& \\
  & =\max \big\{ y(t)-y(s) \bigm| \text{$y: V \to \mathbb{R}$ such that $y(v) - y(u) \le w(uv)$ for all $uv \in A$} \big\}. &
 \end{eqnarray*}
 When $w$ is integer-valued, then the optimal $y$ can be chosen to be integer-valued.
\end{thm}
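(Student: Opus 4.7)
The plan is to exhibit two matching bounds: one by telescoping along a path, the other by taking $y$ to be a shortest-distance potential. For the easy direction $\min \ge \max$, given any $s$-$t$ path $P=(s=v_0,v_1,\dots,v_k=t)$ and any $y$ satisfying the dual constraints, a telescoping sum yields
\[
y(t)-y(s) \;=\; \sum_{i=1}^{k}\bigl(y(v_i)-y(v_{i-1})\bigr) \;\le\; \sum_{i=1}^{k} w(v_{i-1}v_i) \;=\; w(P),
\]
so every feasible $y$ produces a lower bound on $w(P)$ for every $s$-$t$ path $P$.

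For the reverse direction, I would construct an explicit optimal $y$ via shortest-path distances. Conservativeness implies that for every vertex $v$ reachable from $s$ the shortest $s$-$v$ walk weight equals the shortest $s$-$v$ path weight, since any walk decomposes into a simple path together with cycles of nonnegative total weight. Define $y(v)$ to be this shortest $s$-$v$ distance, with $y(s)=0$. For any arc $uv$ with $u$ reachable, extending a shortest $s$-$u$ path by the arc $uv$ produces an $s$-$v$ walk of weight $y(u)+w(uv)$, so $y(v)\le y(u)+w(uv)$, which is precisely the desired dual constraint. Vertices not reachable from $s$ can be handled by augmenting the digraph with an auxiliary arc from $s$ to every such vertex of weight $M>\sum_{a\in A}|w(a)|$; this choice preserves conservativeness (any new cycle contains the weight $M$ summand, which dominates the total weight of any path in the original graph) and does not change the shortest $s$-$t$ distance, while making every vertex reachable. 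Defining $y$ through shortest distances in the augmented digraph then yields a feasible dual solution whose objective value $y(t)-y(s)$ equals the shortest $s$-$t$ weight, closing the gap.

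For the integrality addendum, when $w$ takes integer values the shortest-path distances are integer combinations of arc weights and therefore integral, and $M$ can also be chosen to be an integer; hence the constructed $y$ is integer-valued. The only real technicality in the proof lies in handling vertices unreachable from $s$ cleanly; the heart of the argument is the two-line potential-plus-telescoping pattern displayed above, and I do not expect any conceptual obstacle.
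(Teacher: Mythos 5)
Your argument is correct: the telescoping bound gives $\max\le\min$, and the shortest-distance potential (after augmenting with heavy arcs $s\to v$ to unreachable vertices, which indeed preserves conservativeness and the $s$-$t$ distance since any new $s$-$t$ path and any old one can differ in weight by at most $\sum_{a\in A}|w(a)|<M$ after cancelling shared arcs) achieves equality, with integrality immediate. The paper states this classical result of Duffin without proof, so there is nothing to compare against; your potential-function argument is the standard one and is sound.
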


Based on complementary slackness, one can read out the conditions for a path being optimal. Namely, an $s$-$t$ path $P$ is shortest if and only if there exists a function $y: V \to \mathbb{R}$ such that
\begin{equation}
 \leqnomode
 \begin{aligned}
  y(v) - y(u) =   w(uv) & \qquad \text{for every}\ uv \in P, \\
  y(v) - y(u) \le w(uv) & \qquad \text{for every}\ uv \in A-P.
 \end{aligned}
 \tag{$OPT_P$}
 \label{opt_p}
 \reqnomode
\end{equation}

\paragraph{Bipartite perfect matchings.} Let $G=(S,T;E)$ be a bipartite graph and $w:E\to\mathbb{R}$ be a weight function. A \emph{matching} is a set of edges such that no two edges share a common endpoint, while a matching is \emph{perfect} if it covers all the vertices of the graph. The minimum weight of a perfect matching was characterized by Egerv\'ary \cite{egervary1931}.

\begin{thm}[Egerv\'ary]\label{thm:egervary_bipartite}
 Let $G=(S,T;E)$ be a bipartite graph and $w: E \to \mathbb{R}$ a weight function. Then
 \begin{eqnarray*}
  & \min \big\{ w(M) \bigm| \text{$M \subseteq E$ is a perfect matching} \, \big\}& \\
  &= \max \left\{ \sum_{i \in S \cup T } y(i) ~ \middle| ~ \text{$y: S \cup T \to \mathbb{R}$ such that $y(u) + y(v) \leq w(uv)$ for all $uv \in E$} \right\}.&
\end{eqnarray*}
 When $w$ is integer-valued, then the optimal $y$ can also be chosen to be integer-valued.
\end{thm}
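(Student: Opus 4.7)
The plan is to prove the equality through linear programming duality, exploiting the total unimodularity of the vertex-edge incidence matrix of a bipartite graph. Let $M \in \{0,1\}^{(S \cup T) \times E}$ denote this incidence matrix. The minimum weight of a perfect matching can be written as the integer program $\min\{w^\top x : Mx = \mathbf{1},\ x \in \{0,1\}^E\}$. My first step is to consider its LP relaxation $\min\{w^\top x : Mx = \mathbf{1},\ x \ge 0\}$ and observe that since $G$ is bipartite, $M$ is totally unimodular; together with the integer right-hand side $\mathbf{1}$, this forces every vertex of the feasible polyhedron to be integral, hence a $\{0,1\}$-vector, and so an incidence vector of a perfect matching. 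Consequently the optimum of the LP coincides with the minimum on the left-hand side of the theorem.

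Next, I would write down the LP dual. Because the primal has equality constraints, the dual variables $y \in \mathbb{R}^{S \cup T}$ are unrestricted in sign, and the dual reads $\max\{\mathbf{1}^\top y : M^\top y \le w\}$. The constraint $M^\top y \le w$ is exactly $y(u) + y(v) \le w(uv)$ for every edge $uv \in E$, and $\mathbf{1}^\top y = \sum_{i \in S \cup T} y(i)$, so this is precisely the maximum appearing on the right-hand side of the theorem. Strong LP duality then yields the min-max equality, assuming feasibility of both programs: the primal is feasible whenever a perfect matching exists (the setting in which the theorem is meaningfully applied), and the dual is always feasible, for instance by $y \equiv -C$ with $C$ large enough.

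For the integrality statement, I would again invoke total unimodularity, this time of $M^\top$ (which inherits total unimodularity from $M$). When $w$ is integer-valued, every vertex of the dual polyhedron $\{y : M^\top y \le w\}$ is integral, so an optimal $y$ can be chosen integer-valued. An alternative route, more in the spirit of Duffin's theorem quoted just above, is a constructive primal-dual argument \`a la the Hungarian method: maintain a feasible $y$ and grow an equality subgraph $\{uv : y(u)+y(v) = w(uv)\}$, augmenting along alternating paths and performing dual updates $y \gets y \pm \varepsilon$ on alternating sides of the Hungarian forest until a perfect matching appears in the equality subgraph; by complementary slackness this matching and $y$ are both optimal, and all updates preserve integrality when $w$ is integer.

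The only delicate point is the feasibility question, namely ensuring the minimum on the left is well-defined; once a perfect matching is assumed to exist, the rest is a direct application of LP duality plus the classical total unimodularity of bipartite incidence matrices, so I do not expect any serious obstacle beyond a clean exposition of these ingredients.
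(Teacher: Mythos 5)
The paper does not prove this statement at all: Theorem~\ref{thm:egervary_bipartite} is quoted as a classical preliminary with a citation to Egerv\'ary, so there is no in-paper proof to compare against. Your LP-duality argument via total unimodularity of the bipartite incidence matrix is the standard modern proof and is correct in outline: the primal polytope $\{x : Mx = \mathbf{1},\, x \ge 0\}$ is bounded with integral vertices, the dual is exactly the stated maximum, and strong duality closes the gap once a perfect matching is assumed to exist (which you rightly flag as the implicit hypothesis). One technical point deserves repair in the integrality step: the dual polyhedron $\{y : M^\top y \le w\}$ is \emph{not} pointed --- it contains the line $y + \alpha(\chi_S - \chi_T)$, since shifting all $y$-values on $S$ up by $\alpha$ and on $T$ down by $\alpha$ preserves every constraint $y(u)+y(v) \le w(uv)$ --- so the claim that ``every vertex of the dual polyhedron is integral'' is vacuous. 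The correct invocation is the Hoffman--Kruskal-type corollary that a TU system with integral right-hand side has an integral \emph{optimal solution} whenever an optimum exists (the optimal face contains integral points), or alternatively the constructive Hungarian-method argument you sketch, which maintains an integral $y$ throughout and therefore settles integrality without any appeal to vertices. With that substitution the proof is complete.
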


Based on complementary slackness, one can read out the conditions for a perfect matching being optimal. Namely, a perfect matching $M$ has minimum weight if and only if there exists a function $y: S\cup T \to \mathbb{R}$ such that
\begin{equation}
 \leqnomode
 \begin{aligned}
  y(v) - y(u) =   w(uv) & \qquad \text{for every}\ uv \in M, \\
  y(v) - y(u) \le w(uv) & \qquad \text{for every}\ uv \in E-M.
 \end{aligned}
 \tag{$OPT_M$}
 \label{opt_m}
 \reqnomode
\end{equation}

\paragraph{Arborescences.} Let $D=(V,A)$ be a directed graph. An \emph{arborescence} is a directed tree in which all but one vertex have in-degree $1$, while the remaining vertex, called the \emph{root} of the arborescence, has in-degree $0$. For a vertex $r\in V$, an \emph{$r$-arborescence} is an arborescence rooted at $r$. Assume now that $w:A\to\mathbb{R}$ is a weight function defined on the arcs. The minimum weight of an $r$-arborescence was characterized by Fulkerson \cite{fulkerson1974}.\footnote{Fulkerson's theorem is usually stated for non-negative weight functions. However, as each $r$-arborescence has the same size (i.e. the number of vertices minus one), the theorem for general weights follows by shifting the weights on each arc by the same value. }

\begin{thm}[Fulkerson] \label{thm:fulkerson}
 Let $D=(V,A)$ be digraph, $r \in V$ be a vertex, and $w: A \to \mathbb{R}$ be a weight function. Then
 \begin{eqnarray*}
  & \min \big\{ w(F) \bigm| \text{$F \subseteq A$ is an $r$-arborescence} \big\} &\\
  & \begin{aligned}
     = \max \left\{\rule{0cm}{25pt}\right. \displaystyle\sum_{\emptyset\neq Z \subseteq V-r} y(Z) \left|\rule{0cm}{25pt}\right. \ & \text{$y: 2^{V-r} \to \mathbb{R}$ such that $\displaystyle \sum_{\substack{Z \subseteq V-r: \\  a\in\delta^-(Z)}} y(Z) \le w(a)$ for all $a \in A$,} \\
     & \text{$y(Z) \ge 0$ for all $Z \subseteq V$ with $|Z|>1$} \left.\rule{0cm}{25pt}\right\} .
    \end{aligned}
  &
 \end{eqnarray*}
 When $w$ is integer-valued, then the optimal $y$ can be chosen to be integer-valued.
\end{thm}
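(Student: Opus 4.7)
The plan is to prove the min-max identity in three stages: weak duality, strong duality when $w\ge 0$ via Edmonds' primal-dual branching algorithm, and a reduction from arbitrary weights to the non-negative case. For weak duality, take any $r$-arborescence $F$ and any feasible dual $y$ and swap the summation order:
\[
 w(F)\;\ge\;\sum_{a\in F}\sum_{Z:\,a\in\delta^-(Z)} y(Z)\;=\;\sum_{\emptyset\neq Z\subseteq V-r} y(Z)\, d^-_F(Z).
\]
Since $F$ is an $r$-arborescence, $d^-_F(Z)\ge 1$ for every non-empty $Z\subseteq V-r$ with equality when $|Z|=1$, and combined with $y(Z)\ge 0$ for $|Z|>1$ this yields $\sum_Z y(Z)\, d^-_F(Z)\ge \sum_Z y(Z)$.

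For strong duality with $w\ge 0$, I would run Edmonds' branching algorithm while simultaneously constructing the dual. Maintain a partition of the current vertex set into ``super-vertices'' (initially singletons). In each round, for every non-root super-vertex $Z$, set $y(Z)$ to be the minimum weight of an arc entering $Z$, subtract this value from the weights of all arcs entering $Z$, and pick one zero-weight arc into $Z$. If the chosen arcs form an $r$-arborescence of the contracted graph, undo the contractions in reverse order, discarding from each lifted cycle the arc whose head coincides with the head of the external in-arc chosen for the super-vertex; the remaining arcs form an $r$-arborescence $F$ of the original digraph. Otherwise a directed cycle appears among the chosen arcs; contract it into a new super-vertex and iterate. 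The number of super-vertices strictly decreases between contractions, so the process terminates. By construction every arc of $F$ is tight with respect to $y$, and every $Z$ with $y(Z)>0$ satisfies $d^-_F(Z)=1$, so both inequalities in the weak-duality chain hold with equality and $w(F)=\sum_Z y(Z)$. Integrality of $y$ when $w$ is integer is preserved automatically because each $y$-increment is the difference of two current arc weights.

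The general case reduces to $w\ge 0$ via the shift noted in the footnote: pick $c$ with $w+c\ge 0$ on every arc, observe that $w'(F)=w(F)+(n-1)c$ since $|F|=n-1$, and turn any optimal dual $y'$ for $w+c$ into a dual $y$ for $w$ by setting $y(\{v\})=y'(\{v\})-c$ for $v\in V-r$ and $y(Z)=y'(Z)$ for $|Z|>1$, which preserves feasibility and shifts the objective by $-(n-1)c$. Choosing $c\in\mathbb{Z}$ when $w$ is integer preserves integrality. The main obstacle lies in justifying the primal-dual step on contracted graphs: one must argue that an optimal arborescence of the contracted instance lifts to an optimal arborescence of the original, which rests on the fact that after each dual update all arcs of the newly contracted cycle carry weight zero and exactly one arc enters the super-vertex in the final solution. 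This is the classical cycle-shrinking argument of Edmonds and accounts for the bulk of the technical work.
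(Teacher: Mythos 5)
The paper does not prove this statement: it is quoted as Fulkerson's classical theorem with a citation, and the only argument supplied is the footnote reducing general weights to non-negative ones by a uniform shift --- which is exactly the third stage of your plan, so that part matches the paper verbatim in spirit. Your remaining two stages supply what the paper omits, and they follow the standard primal--dual route. The weak-duality computation is correct: summing the dual constraints over $a\in F$ gives $w(F)\ge\sum_Z y(Z)\,d^-_F(Z)$, and the case split ($d^-_F(\{v\})=1$ exactly for singletons, where $y$ may be negative, versus $d^-_F(Z)\ge 1$ and $y(Z)\ge 0$ for $|Z|>1$) is the right one. The dual-shift in the reduction is also sound, since only singleton duals are perturbed and those are unconstrained in sign.

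The one place where your argument is an outline rather than a proof is the one you flag yourself: the claim that the expanded arc set is an $r$-arborescence with every arc tight and with $d^-_F(Z)=1$ for every contracted set $Z$. This is genuinely where all the work lives (one must check that arcs inside a contracted cycle are unaffected by the later dual increase on the super-vertex, that the expansion removes exactly one cycle arc so that exactly one arc of $F$ enters each contracted set, and that the nested contracted sets form a laminar family on which $y$ is supported). A minor imprecision: you say that in \emph{each} round every non-root super-vertex has its dual set to the minimum weight of an entering arc; after the first round the already-processed super-vertices have a zero-weight entering arc, so this re-assignment is a no-op, but it would be cleaner to say each super-vertex receives its dual value exactly once, at creation. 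None of this invalidates the approach; it is the classical Fulkerson/Edmonds argument and, if the deferred cycle-shrinking lemma is written out, it proves the statement including the integrality claim.
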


Based on complementary slackness, one can read out the conditions for an $r$-arborescence being optimal. Namely, an $r$-arborescence $F$ has minimum weight if and only if there is a function $y: 2^{V-r} \to \mathbb{R}$ such that
\begin{equation}
 \leqnomode
 \begin{aligned}
  \sum_{\substack{Z \subseteq V-r: \\ a\in\delta^-(Z)}} y(Z) &= w(a) & \qquad & \text{for every $a \in F$,} \\
  \sum_{\substack{Z \subseteq V-r: \\ a\in\delta^-(Z)}} y(Z) &\le w(a) & & \text{for every $a \in A-F$,} \\
  y(Z) &\geq 0 & & \text{for every $Z \subseteq V-r$ with $|Z| > 1$ and $d^-_F(Z) = 1$,} \\
  y(Z) &= 0 & & \text{for every $Z \subseteq V-r$ with $|Z| > 1$ and $d^-_F(Z) > 1$.}
 \end{aligned}
 \tag{$OPT_A$}
 \label{opt_a}
 \reqnomode
\end{equation}

\section{Mildly adequate deviation vectors} \label{sec:adequate}

Consider a general inverse optimization problem $(S, \cF, F, \{w_i\}_{i=1}^k,\| \cdot \|_1)$. A feasible deviation vector $p$ is called \emph{adequate} if $p(s) \geq 0$ for all $s \in F$ and $p(s) \leq 0$ for all $s \in S-F$. If, in addition, $w_i \ge 0$ and $w_i - p \ge 0$ hold for all $i \in [k]$ and $p(s)=0$ holds for all $s \in S-F$, then $p$ is called \emph{strongly adequate}. It is not difficult to see that an optimal deviation vector is always adequate. The following technical lemma provides an easy lower bound for the $\ell_1$-norm of any feasible deviation vector.

\begin{lem} \label{lem:p_lower_bound}
 Given an inverse optimization problem $(S, \cF, F, \{w_i\}_{i=1}^k,\| \cdot \|_1)$, let $p$ be an optimal deviation vector. Furthermore, let $F'_i$ be an optimal solution to the underlying $w_i$-weight optimization problem for $i \in [k]$. Then 
 \begin{equation*}
  \| p \|_1 \geq \max_{i \in [k]} \big\{ w_i(F) - w_i(F'_i) \big\}.
 \end{equation*}
\end{lem}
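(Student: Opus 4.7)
The plan is to combine two simple observations: feasibility of $p$ compares the $(w_i-p)$-weight of $F$ with that of the optimum $F'_i$, and adequacy (which is free here since $p$ is optimal) turns signed sums of $p$ into absolute value sums.

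First, I would invoke the remark preceding the lemma to conclude that the optimal $p$ is adequate, i.e., $p(s)\ge 0$ for all $s\in F$ and $p(s)\le 0$ for all $s\in S-F$. Next, fix an index $i\in[k]$. Since $p$ is feasible, $F$ is a minimum-weight element of $\cF$ with respect to $w_i-p$, so in particular
\begin{equation*}
 (w_i-p)(F) \;\le\; (w_i-p)(F'_i),
\end{equation*}
which rearranges to $w_i(F)-w_i(F'_i) \le p(F)-p(F'_i)$.

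The remaining step is to show $p(F)-p(F'_i)\le \|p\|_1$. I would split both terms according to the partition $F\triangle F'_i = (F\setminus F'_i)\cup(F'_i\setminus F)$, cancelling the contribution from $F\cap F'_i$, to obtain
\begin{equation*}
 p(F)-p(F'_i) \;=\; \sum_{s\in F\setminus F'_i} p(s) \;-\; \sum_{s\in F'_i\setminus F} p(s).
\end{equation*}
By adequacy, $p(s)=|p(s)|$ on $F\setminus F'_i\subseteq F$ and $-p(s)=|p(s)|$ on $F'_i\setminus F\subseteq S-F$, so the right-hand side equals $\sum_{s\in F\triangle F'_i}|p(s)|$, which is at most $\|p\|_1$. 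Taking the maximum over $i\in[k]$ yields the claimed bound.

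The argument is essentially routine once adequacy is in hand; the only ``trick'' is recognizing that adequacy is exactly the condition that converts the signed quantity $p(F)-p(F'_i)$ into the sum of absolute values over the symmetric difference. I do not expect any real obstacle: the feasibility inequality is immediate from the definition, and the adequacy step is a straightforward sign bookkeeping.
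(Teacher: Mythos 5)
Your proof is correct and follows essentially the same route as the paper: adequacy of the optimal $p$, the feasibility inequality $(w_i-p)(F)\le (w_i-p)(F'_i)$, and a sign argument bounding $p(F)-p(F'_i)$ by $\|p\|_1$ (the paper writes this last step as $p(F)-p(F'_i)\le p(F)-p(S-F)=\|p\|_1$, while you pass through the symmetric difference, but the content is identical).
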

\begin{proof}
 By the above, we may assume that $p$ is adequate. Since $p$ is feasible,
 \[ (w_i - p)(F) \le (w_i-p)(F'_i) \]
 holds for each $i \in [k]$. Thus, by the adequateness of $p$, we get
 \[ w_i(F) - w_i(F'_i) \le p(F) - p(F'_i) \le p(F) - p(S-F) = \| p \|_1, \]
 and the lemma follows.
\end{proof}

The existence of strongly adequate optimal deviations was settled in various settings when a single weight function is given, see \cite{frank2021inverse} for an example. However, with multiple weight functions, ensuring the non-negativity after the modification is more difficult. Intuitively, if $w_i(s)$ is small and $w_j(s)$ is large for some $s \in F$, then it might happen that the weight of $s$ has to be decreased significantly in every optimal deviation vector, resulting in $w_i(s)-p(s)$ being negative.

To overcome this, we introduce a notion that lies between adequateness and strongly adequateness in strength. We call a feasible deviation vector $p$ \emph{mildly adequate} if it is adequate and $p(s)=0$ holds for all $s \in S-F$. That is, a mildly adequate feasible deviation vector is non-negative and changes the weights only on the elements of the input solution. Our first result is a structural characterization of the existence of mildly adequate optimal deviation vectors.

\begin{thm} \label{thm:adequate}
 An inverse optimization problem $(S,\cF,F,\{w_i\}_{i=1}^k,\| \cdot \|_1)$ admits a mildly adequate optimal deviation vector $p$ for every choice of the weight functions $w_1, \ldots, w_k$ if and only if for any $e\in S-F$, there exists $f\in F$ such that $|F'\cap\{e,f\}|\leq 1$ for every $F'\in\cF$. 
\end{thm}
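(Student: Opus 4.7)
For the ``if'' direction, the hypothesis supplies, for each $e\in S-F$, some $\phi(e)\in F$ such that no feasible solution contains both $e$ and $\phi(e)$. My plan is to start from any optimal feasible deviation vector $p^*$---which, by the observation preceding Lemma~\ref{lem:p_lower_bound}, may be assumed adequate---and redistribute its negative mass on $S-F$ onto $F$ along $\phi$. Writing $\alpha_e:=-p^*(e)\ge 0$ for $e\in S-F$, I will set
\[
 p'(e) := 0 \text{ for } e\in S-F, \qquad p'(f) := p^*(f) + \sum_{e\in\phi^{-1}(f)} \alpha_e \text{ for } f\in F.
\]
By construction $p'$ is mildly adequate, and a short calculation will give $\|p'\|_1 = p'(F) = p^*(F) - p^*(S-F) = \|p^*\|_1$; so optimality of $p'$ follows once feasibility is verified.

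For feasibility, the required inequality $(w_i-p')(F)\le(w_i-p')(F')$ for each $F'\in\cF$ and each $i$ reduces, using the analogous inequality for $p^*$, to the purely combinatorial estimate $p'(F')-p^*(F')\le p'(F)-p^*(F)$. The right-hand side equals $\sum_{e\in S-F}\alpha_e$, while the left-hand side equals $\sum_{e\in F'\cap(S-F)}\alpha_e + \sum_{e\in S-F,\,\phi(e)\in F'}\alpha_e$. The hypothesis on $\phi$ says exactly that $e\in F'$ and $\phi(e)\in F'$ cannot hold simultaneously, so the two index sets on the left are disjoint subsets of $S-F$; the inequality then follows since $\alpha_e\ge 0$.

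For the ``only if'' direction, assume some $e^*\in S-F$ violates the condition, so for every $f\in F$ there exists $F'_f\in\cF$ with $\{e^*,f\}\subseteq F'_f$. I will produce a single weight function $w$ (taking $k=1$) as witness: set $w(e^*):=-M$ for a large constant $M>0$ and $w\equiv 0$ elsewhere. Then the vector $p^\circ(e^*):=-M$, $p^\circ\equiv 0$ elsewhere, is feasible with $\|p^\circ\|_1=M$, matching the lower bound of Lemma~\ref{lem:p_lower_bound}. If instead $p$ is any mildly adequate feasible deviation vector, then $p\equiv 0$ on $S-F$, so feasibility against $F'_f$ forces $p(F-F'_f)\ge M$; since $f\in F\cap F'_f$, this yields $p(F)-p(f)\ge M$ for every $f\in F$. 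Summing over $f$ gives $(|F|-1)\,p(F)\ge |F|\,M$, so $\|p\|_1 = p(F) \ge \tfrac{|F|}{|F|-1}\,M > M$ when $|F|\ge 2$, and a contradiction when $|F|=1$; in either case no mildly adequate $p$ is optimal.

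The main technical step is the sufficiency direction: once one guesses the ``transport along $\phi$'' ansatz, the hypothesis is precisely what makes the two index sets in the feasibility check disjoint, after which both the norm identity and the feasibility estimate are one-line reductions. The necessity direction is routine once one notices that a single sufficiently negative weight at $e^*$ pushes the mildly adequate optimum strictly above the unrestricted one.
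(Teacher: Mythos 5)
Your proof is correct and follows essentially the same strategy as the paper's: in the sufficiency direction you transport the negative mass of an adequate optimum from each $e\in S-F$ to a partner $f=\phi(e)\in F$ (all at once, where the paper does it one coordinate at a time via an extremal choice of $p'$), with the condition $|F'\cap\{e,f\}|\le 1$ supplying exactly the disjointness needed to preserve feasibility, and in the necessity direction you use the same construction of a single negative weight on the violating element $e^*$. The only cosmetic differences are the batch redistribution along $\phi$ and your summation over all $f\in F$ where the paper simply picks one $f$ with $p(f)>0$.
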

\begin{proof}
 To see the `if' direction, take an adequate optimal deviation vector $p'$ for which the number of elements $e \in S-F$ with $p'(e)<0$ is minimal. If there exists no such element at all, then $p'$ is mildly adequate and we are done. Hence assume that $p'(e)<0$ for some $e\in S-F$. Let $f\in F$ be an element such that $|F'\cap\{e,f\}|\leq 1$ for every $F'\in\cF$. Define 
 \[ p(s) =
    \begin{cases}
     0           & \text{if $s = e$,} \\
     p'(f)-p'(e) & \text{if $s = f$,} \\
     p'(s)       & \text{otherwise.}
    \end{cases} \]
 We claim that $F$ is a minimum weight member of $\cF$ with respect to $w_i-p$ for each $i \in [k]$. Indeed, for any $F'\in\cF$ we have
 \begin{align*}
  w_i(F)-p(F) 
  {}&{}= 
  w_i(F) - \big( p'(F)-p'(e) \big) \\
  {}&{}\leq 
  w_i(F')-p'(F')+p'(e) \\
  {}&{}\leq 
  w_i(F')-p(F'),
 \end{align*}
 where the last inequality holds by $|F' \cap \{ e,f \} | \leq 1$. Observe that $\|p\|_1=\|p'\|_1$, but $\big| \{s \in S-F : p(s)<0\} \big| < \big| \{ s \in S-F : p'(s)<0 \} \big|$, contradicting the choice of $p'$.
 
 We prove the `only if' direction by constructing a special weight function. Assume that the condition fails, that is, there is an $e \in S-F$ such that for any $f \in F$ there exists $F_f \in \mathcal{F}$ with $e,f \in F_f$. Consider the weight function $w$ that assigns 0 to all elements of $S - e$ and assigns $-1$ to $e$. It is not difficult to see that
 \begin{equation*}
  w(F') =
  \begin{cases}
   -1 & \text{if $e \in F'$,} \\
   0 & \text{if $e \notin F'$}
  \end{cases}
 \end{equation*}
 holds for any $F' \in \mathcal{F}$. In particular, $w(F) = 0$, therefore $\| p \|_1 \ge 1$ holds for any feasible deviation vector $p$ by Lemma~\ref{lem:p_lower_bound}. Clearly,
 \[ p^*(s) =
    \begin{cases}
     -1 & \text{if $s = e$,} \\
      0 & \text{otherwise}
    \end{cases} \]
 is a feasible deviation vector, which, in addition, is optimal as $\| p^* \|_1 = 1$. Let $p$ be a mildly adequate feasible deviation vector and let $f \in F$ be an element so that $p(f) > 0$ -- note that such an element must exist by $\| p \|_1 \ge 1$. Suppose to the contrary that $p$ is optimal, i.e., $\| p \|_1 = 1$. Then $(w-p)(F)=-1$, but 
 \[ (w-p) (F_f) = w(F_f) - p(F_f) = -1 - p(F_f) \le -1 - p(f) < -1 , \]
 a contradiction.
\end{proof}

\begin{rem} \label{rem:adequate}
 Though the condition of Theorem~\ref{thm:adequate} might seem to be rather artificial, it is satisfied in fundamental inverse optimization problems. When $\cF$ consists of all $r$-arborescences of a directed graph $D=(V,A)$ where the in-degree of $r$ is $0$, then for an arc $e\in A-F$ the arc $f\in F$ sharing the same head vertex is a proper choice. When $\cF$ consists of all perfect matchings of a (not necessarily bipartite) graph $G=(V,E)$, then for an edge $e\in E-F$ any of the two edges in $F$ incident to the end-vertices of $e$ is a proper choice. If the members of $\cF$ are the bases of a partition matroid\footnote{A \emph{partition matroid} is a matroid $M=(S,\cB)$ with family of bases $\cB= \big\{ X\subseteq S : |X\cap S_i|= 1\ \text{for all $i \in [q]$} \big\}$ for some partition $S=S_1\cup\dots\cup S_q$.} $M$, then for an element $e\in S-F$ the element $f\in F$ being in the same partition class as $e$ is a proper choice. On the other hand, it is not difficult to see that the theorem cannot be applied for the inverse shortest $s$-$t$ path problem in general. Interestingly, however, if we restrict ourselves to a \emph{single conservative} weight function, then one can still find a mildly adequate deviation vector, see Theorem \ref{thm:minmax_shortest_path}.

 Note that neither Lemma~\ref{lem:p_lower_bound} nor Theorem~\ref{thm:adequate} hold for arbitrary norms. To see this, let $S=\{ s_1, s_2, s_3 \}$, $\cF = \big\{ \{ s_1 \}, \{ s_2, s_3 \} \big\}$, $F = \{ s_1 \}$, $w(s_1)=1, w(s_2)=w(s_3)=0$, and consider the $\ell_{\infty}$-norm. It is not difficult to see that $F^*:=\{ s_2, s_3 \}$ is a minimum $w$-weight solution with $w(F^*)=0$ and $p^*=(0,-1/2,-1/2)$ is an optimal deviation vector. Clearly, $\| p^* \|_{\infty} = 1/2$, while $w(F)-w(F^*)=1$. Moreover, even though for any $e \in S-F$ there exists $f \in F$ such that $|F' \cap \{ e,f \}| \le 1$ for every $F' \in \cF$, there exists no mildly adequate optimal deviation vector since the $\ell_{\infty}$-norm of any such vector is at least 1.
\end{rem}

\section{Inverse shortest path problem} \label{sec:inverse_path} 

In this section, we consider the inverse shortest $s$-$t$ path problem. As a warm-up, we explain how to solve the problem for one conservative weight function. Then we give a min-max characterization for the case of multiple conservative weight functions. In the case of multiple conservative weight functions, we show that there does not necessarily exist a mildly adequate optimal deviation vector, contrary to the case when only a single weight function is given.

\subsection{Inverse shortest path problem with one weight function}

The inverse shortest $s$-$t$ path problem with a single conservative weight function and $\ell_1$-norm was solved in \cite{Hajdu2020thesis}. Nevertheless, in order to make the paper self-contained, we repeat the corresponding min-max theorem together with its algorithmic proof.

\begin{thm}[Hajdu] \label{thm:minmax_shortest_path}
 Let $D=(V,A)$ be a digraph, $s,t \in V$ be two vertices, $P \subseteq A$ be an $s$-$t$ path, and $w:~A \to \mathbb{R}$ be a conservative weight function. Then
 \begin{eqnarray*} 
  &\min \big\{ \| p \|_1 \bigm| \text{$P$ is a shortest $s$-$t$ path with respect to $w-p$} \big\}&\\
  &=\max \big\{ w(P) - w(P') \bigm| \text{$P' \subseteq A$ is an $s$-$t$ path} \}.&
 \end{eqnarray*} 
 Moreover, there always exists a mildly adequate optimal deviation vector $p$, which, in addition, can be chosen to be integer-valued when $w$ is integer-valued.
\end{thm}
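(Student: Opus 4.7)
The plan is to prove the two inequalities separately; the lower bound is essentially Lemma~\ref{lem:p_lower_bound}, and the upper bound is established by an explicit construction that uses an optimal dual potential from Duffin's theorem.

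For the $\min \geq \max$ direction, let $P_*$ be a shortest $s$-$t$ path with respect to $w$. Lemma~\ref{lem:p_lower_bound} applied with $k=1$ gives $\|p\|_1 \geq w(P) - w(P_*)$ for any feasible deviation vector $p$. Since $w(P) - w(P') \le w(P) - w(P_*)$ holds for every $s$-$t$ path $P'$, the maximum on the right-hand side of the theorem equals $w(P) - w(P_*)$, yielding the lower bound.

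For the $\min \leq \max$ direction, apply Theorem~\ref{thm:Duffin} (Duffin) to $w$ to obtain a potential $y \colon V \to \mathbb{R}$ satisfying $y(v) - y(u) \leq w(uv)$ for all $uv \in A$ and $y(t) - y(s) = w(P_*)$. Define
\[
 p(a) := \begin{cases} w(uv) - \bigl(y(v) - y(u)\bigr) & \text{if } a = uv \in P, \\ 0 & \text{if } a \in A - P. \end{cases}
\]
By construction $p(a) \ge 0$ on $P$ and $p$ vanishes outside $P$, so $p$ is mildly adequate. A telescoping sum gives $\|p\|_1 = p(P) = w(P) - (y(t) - y(s)) = w(P) - w(P_*)$, matching the desired bound. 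It remains to verify that $p$ is feasible, i.e., that $P$ is a shortest $s$-$t$ path with respect to $w-p$. For this, observe that $y$ still satisfies the dual constraints for $w-p$: on $P$ we have $(w-p)(uv) = y(v) - y(u)$ by construction, and on $A - P$ we have $(w-p)(uv) = w(uv) \geq y(v) - y(u)$. In particular, along any directed cycle $C$ the telescoping sum gives $(w-p)(C) \geq \sum_{uv \in C}(y(v) - y(u)) = 0$, so $w-p$ is conservative. Applying Duffin's theorem to $w-p$, the minimum $s$-$t$ path weight is at least $y(t) - y(s) = (w-p)(P)$, hence $P$ is indeed shortest.

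Finally, the integrality claim is immediate: when $w$ is integer-valued, Theorem~\ref{thm:Duffin} supplies an integer-valued optimal $y$, and the definition of $p$ then produces an integer-valued deviation vector. The main conceptual step is the construction of $p$ from the dual potential; once that is in place, the feasibility verification is a routine application of complementary slackness, and I do not foresee a genuine obstacle beyond carefully checking that $w - p$ remains conservative so that Duffin's theorem can be invoked a second time.
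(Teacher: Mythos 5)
Your proposal is correct and follows essentially the same route as the paper: both use Lemma~\ref{lem:p_lower_bound} for the lower bound and construct $p(uv)=w(uv)-(y(v)-y(u))$ on $P$ from an optimal dual potential $y$ of Duffin's theorem, with feasibility following from complementary slackness (your explicit re-application of Duffin to $w-p$ is just an unpacked version of the paper's appeal to \eqref{opt_p}). No gaps.
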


\begin{proof}
 Let $P' \subseteq A$ be a shortest $s$-$t$ path with respect to $w$. By Lemma~\ref{lem:p_lower_bound}, $\| p \|_1 \ge w(P) - w(P')$ holds for any feasible deviation vector $p$, showing $\min\geq\max$.

 To see the reverse inequality, recall that the shortest $s$-$t$ path problem can be formulated as 
 \begin{equation*} 
 \begin{array}{rl@{}lclr}
  \min & \displaystyle \sum_{uv \in A} w(uv) \, x(uv) & & & \\
  \text{s.\,t.} & \displaystyle \sum_{uv \in \delta^-(v)}  x(uv) - \displaystyle \sum_{vu \in \delta^+(v)} & x(vu) & = & \left\{ \begin{array}{@{}rl@{}}
   -1 & \text{if $v=s$,} \\ 
    1 & \text{if $v=t$,} \\
    0 & \text{otherwise,}
  \end{array} \right. & \\[25pt]
   & & x(uv) & \ge & 0 \hspace{30pt} \forall uv \in A .
 \end{array}
 \end{equation*}
 The dual program is
 \begin{equation} \label{eq:shortestPathDual}
  \rreqnomode
  \begin{array}{rlr}
   \min & y(t) - y(s) \\[5pt]
   \text{s.\,t.} & y(v) - y(u) \leq w(uv) & \qquad \forall uv \in A .
  \end{array}
 \end{equation}
 Let $P' \subseteq A$ be a shortest path with respect to $w$, and $y$ be an optimal solution to the dual problem \eqref{eq:shortestPathDual}. Define
 \[ p(uv)= \begin{cases}
  w(uv) - \big( y(v) - y(u) \big) & \text{if $uv \in P$,} \\
  0 & \text{otherwise.}
 \end{cases} \]
 By \eqref{opt_p}, $P$ is a shortest $s$-$t$ path with respect to $w-p$, i.e., $p$ is a feasible deviation vector. By the constraints of the dual program \eqref{eq:shortestPathDual}, $p$ is mildly adequate. Hence
 \begin{align*}
  \| p \|_1 
  {}&{}= 
  \sum_{uv \in A} \big| p(uv) \big| \\
  {}&{}= 
  \sum_{uv \in P} w(uv) - \big( y(v) - y(u) \big) \\
  {}&{}= 
  w(P) - \big( y(t) - y(s) \big) \\
  {}&{}= 
  w(P) - w(P') ,
 \end{align*}
 where the last equality follows by strong duality. Therefore $\min=\max$ holds. When $w$ is integer-valued, then the optimal dual solution $y$ can also be chosen to be integer-valued by Theorem~\ref{thm:Duffin}, proving the second half of the theorem.
\end{proof}

\begin{rem}
 When $w$ is not conservative, there does not necessarily exist a mildly adequate optimal deviation vector; for an example, see Figure~\ref{fig:inversePath_NotMildlyAdeq_OneNonconservative}.
\end{rem}

\begin{figure}[t]
\begin{center}
\begin{tikzpicture}[scale=1.5,>={Latex[length=8pt,width=8pt]}]
 \tikzstyle{vertex}=[draw,circle,minimum size=20,inner sep=1]
 
 \node[vertex] (s) at (0,0) {$s$};
 \node[vertex] (t) at (4,0) {$t$};
 \node[vertex] (a) at (1,1) {$a$};
 \node[vertex] (b) at (3,1) {$b$};
 \node[vertex] (c) at (2,0) {$c$};
 
 \draw[->] (a) -- (b) node[pos=0.5,above] {$-1$};
 \draw[->] (s) -- (a) node[pos=0.5,left] {$0$};
 \draw[->,line width=2pt] (s) -- (c) node[pos=0.5,below] {$0$};
 \draw[->] (b) -- (c) node[pos=0.5,left] {$0$};
 \draw[->] (b) -- (t) node[pos=0.5,right] {$0$};
 \draw[->] (c) -- (a) node[pos=0.5,right] {$0$};
 \draw[->,line width=2pt] (c) -- (t) node[pos=0.5,below] {$0$};
\end{tikzpicture}
\caption{An example showing that a mildly adequate optimal deviation vector does not necessarily exist when $w$ is not conservative. Thick edges denote the input path $P$, while the numbers denote the weights of the arcs. For any mildly adequate feasible deviation vector $p$, $p(sc) \ge 1$ and $p(ct) \ge 1$ must hold, implying $\| p \|_1 \ge 2$. However, setting $p^*(ab) := -1$ and $p^*(e):=0$ for all $e \in A - ab$ results in a feasible deviation vector with $\| p^* \|_1 = 1$.}
\label{fig:inversePath_NotMildlyAdeq_OneNonconservative}
\end{center}
\end{figure}
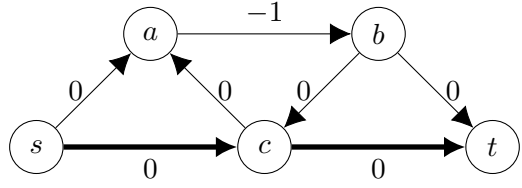

\subsection{Inverse shortest path problem with multiple weight functions} \label{sec:subsection_shortestPathMultiple}

Our goal is to give a min-max formula for the inverse shortest $s$-$t$ path problem with multiple conservative weight functions. Recall that, in such a problem, conservative weight functions $w_1, \ldots, w_k :A\to\mathbb{R}$ are given, and the goal is to find a deviation vector $p:A\to\mathbb{R}$ such that the given $s$-$t$ path $P$ becomes a shortest $s$-$t$ path with respect to the revised weight functions $w_1-p, \ldots, w_k-p$, while $\|p\|_1$ is minimized.

Unlike in the single-weighted setting, the multiple-weighted variant does not always admit a mildly adequate optimal solution; for an example, see Figure~\ref{fig:inversePath_NotMildlyAdeq_MoreConservative}. However, when the input $s$-$t$ path $P$ is Hamiltonian, Theorem~\ref{thm:adequate} applies. Indeed, an optimal deviation vector does not change the values on arcs whose head vertex is $s$, while for an arc $e \in A-P$ with head vertex different from $s$ the arc $f \in P$ sharing the same head vertex is a proper choice, implying the existence of a mildly adequate optimal deviation vector.

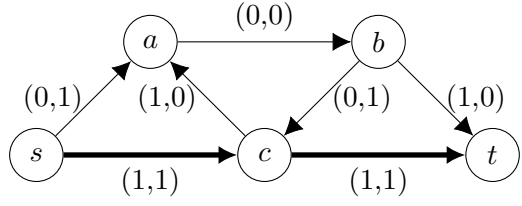
\begin{figure}[t]
\begin{center}
\begin{tikzpicture}[scale=1.5,>={Latex[length=7pt,width=7pt]}]
 \tikzstyle{vertex}=[draw,circle,minimum size=20,inner sep=1]
 
 \node[vertex] (s) at (0,0) {$s$};
 \node[vertex] (t) at (4,0) {$t$};
 \node[vertex] (a) at (1,1) {$a$};
 \node[vertex] (b) at (3,1) {$b$};
 \node[vertex] (c) at (2,0) {$c$};
 
 \draw[->] (a) -- (b) node[pos=0.5,above] {$(0{,}0)$};
 \draw[->] (s) -- (a) node[pos=0.5,left] {$(0{,}1)$};
 \draw[->,line width=2pt] (s) -- (c) node[pos=0.5,below] {$(1{,}1)$};
 \draw[->] (b) -- (c) node[pos=0.5,right] {$(0{,}1)$};
 \draw[->] (b) -- (t) node[pos=0.5,right] {$(1{,}0)$};
 \draw[->] (c) -- (a) node[pos=0.5,left] {$(1{,}0)$};
 \draw[->,line width=2pt] (c) -- (t) node[pos=0.5,below] {$(1{,}1)$};
\end{tikzpicture}
\caption{An example showing that a mildly adequate optimal deviation vector does not necessarily exist for more than one conservative weight functions. Thick edges denote the input path $P$, while the numbers denote the weights $w_1$ and $w_2$ of the arcs. For any mildly adequate feasible deviation vector $p$, $p(sc) \ge 1$ and $p(ct) \ge 1$ must hold, implying $\| p \|_1 \ge 2$. However, setting $p^*(ab) := -1$ and $p^*(e):=0$ for all $e \in A - ab$ results in a feasible deviation vector with $\| p^* \|_1 = 1$.}
\label{fig:inversePath_NotMildlyAdeq_MoreConservative}
\end{center}
\end{figure}

The following theorem provides a min-max characterization of the optimum value.

\begin{thm} \label{thm:main}
 Let $D=(V,A)$ be a directed graph, $s,t \in V$ be two vertices, $w_1, \ldots, w_k: A \to \mathbb{R}$ be conservative weight functions, and $P \subseteq A$ be an $s$-$t$ path. Then 
 \begin{eqnarray*}
  &\min \big\{ \|p\|_1 \bigm| \text{$P$ is a shortest $s$-$t$ path with respect to $w_1 - p, \ldots, w_k-p$} \big\}& \\
  &=  \max \left\{ \displaystyle\sum_{i \in [k]} w_i(P) - \displaystyle\sum_{i \in [k]} \sum_{a \in A} w_i(a) \, x_i(a) ~ \middle| \text{ \parbox{.45\textwidth}{$x=(x_1,\ldots,x_k)$ is a multi-commodity $s$-$t$ flow with total flow value at most $1$ on each $a\in A-P$ and at least $k-1$ on each $a\in P$}} \right\} .&
 \end{eqnarray*}
 Moreover, both an optimal deviation vector $p$ and multi-commodity flow $x$ can be determined in polynomial time.
\end{thm}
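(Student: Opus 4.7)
The plan is to establish the equality via linear programming duality. Using Theorem~\ref{thm:Duffin} together with complementary slackness, the condition that $P$ be a shortest $s$-$t$ path with respect to $w_i-p$ is equivalent to the existence of a potential $y_i:V\to\mathbb{R}$ satisfying $y_i(v)-y_i(u)\le w_i(uv)-p(uv)$ for every $uv\in A$, with equality whenever $uv\in P$; compare~\eqref{opt_p}. Writing $p=p^+-p^-$ with $p^\pm\ge 0$ turns the minimum on the left-hand side into a linear program in the variables $(p^\pm(a))_{a\in A}$ and $(y_i(v))_{i\in[k],v\in V}$, with objective $\sum_a(p^+(a)+p^-(a))$, inequality constraints on arcs in $A-P$, and equality constraints on arcs in $P$. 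This LP is feasible (e.g.\ by taking $p$ sufficiently large in absolute value on $A-P$) and bounded below by zero, so strong LP duality is available.

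Next, I would compute the dual. It has variables $\alpha_i(a)\ge 0$ for $a\in A-P$ (from the inequalities) and $\beta_i(a)\in\mathbb{R}$ for $a\in P$ (from the equalities). Stationarity in the free potentials $y_i(v)$ produces a balance equation at every vertex, while stationarity in $p^\pm(a)$ yields $\sum_i\alpha_i(a)\le 1$ for $a\in A-P$ and $\bigl|\sum_i\beta_i(a)\bigr|\le 1$ for $a\in P$. The substitution $x_i(a):=\alpha_i(a)$ for $a\in A-P$ and $x_i(a):=1-\beta_i(a)$ for $a\in P$ then converts the balance equation into the statement that $x_i$ is an $s$-$t$ flow of value one; the $p^\pm$-constraints become $\sum_ix_i(a)\le 1$ for $a\in A-P$ and $k-1\le\sum_ix_i(a)\le k+1$ for $a\in P$; and a short rewriting (using $\sum_{uv\in P}w_i(uv)=w_i(P)$) reduces the dual objective to $\sum_iw_i(P)-\sum_i\sum_aw_i(a)x_i(a)$.

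The step I expect to be the main obstacle is reconciling this algebraic dual with the theorem's cleaner formulation, which enforces $x_i\ge 0$ also on $P$ and omits the upper bound $\sum_ix_i(a)\le k+1$ on $P$. I would show that neither discrepancy affects the maximum by invoking conservativeness of the $w_i$: any feasible dual solution with $x_i(a)<0$ on some $a\in P$, or with $\sum_ix_i(a)>k$ on $P$, can be modified through a circulation of nonnegative $w_i$-cost, which changes $x_i$ into a circulation-free nonnegative $s$-$t$ flow while only weakly improving the dual objective; in particular this forces $x_i(a)\le 1$ on every arc at the optimum, so the bound $\sum_ix_i(a)\le k+1$ is automatically slack. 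Strong LP duality then yields the claimed min-max, and optimal $p$ and $x$ can be extracted in polynomial time by applying any polynomial-time LP algorithm to the primal-dual pair above.
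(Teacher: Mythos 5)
Your overall strategy---dualize the complementary-slackness formulation and massage the dual into the multi-commodity-flow form via the substitution $x_i(a):=1-\widehat{x}_i(a)$ on $P$---is the same as the paper's, and your derivation of the ``algebraic dual'' is correct as far as it goes. The genuine gap is the reconciliation step, and it is not cosmetic. Because you dualize the raw formulation (equality constraints on $P$, $p=p^+-p^-$), the multipliers $\beta_i(a)$ on $P$ are free, so after the substitution the individual $x_i(a)$ on arcs of $P$ are unrestricted in sign. To remove a value $x_i(a_0)<0$ with $a_0\in P$ you must \emph{increase} $x_i$ on $a_0$, and flow conservation forces this increase to be part of a circulation containing $a_0$ with positive coefficient. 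If that circulation is a nonnegative combination of directed cycles of $D$, conservativeness gives it \emph{nonnegative} cost, so adding it increases $\sum_a w_i(a)x_i(a)$ and hence weakly \emph{decreases} the dual objective $w_i(P)-\sum_a w_i(a)x_i(a)$---the opposite of what you claim. If instead you cancel the negative flow on $a_0=uv$ against positive flow on a $v$-$u$ path $Q$ in the support of $x_i$, the net cost change is $\varepsilon\big(w_i(Q)-w_i(a_0)\big)$, whose sign conservativeness does not control (it only bounds $w_i(a_0)+w_i(Q)$ from below). So the assertion that any feasible dual solution ``can be modified through a circulation of nonnegative $w_i$-cost \dots while only weakly improving the dual objective'' is unsubstantiated precisely where you need it. (Cycle \emph{removal} to kill values above $1$ is fine---that direction does agree with conservativeness---but it does not address negativity.)

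The paper sidesteps this entirely by first observing that an optimal deviation vector is adequate ($p\ge 0$ on $P$, $p\le 0$ on $A-P$), which lets it write $\|p\|_1$ linearly without splitting $p$ and relax the equalities on $P$ to one-sided inequalities. The dual multipliers $\widehat{x}_i$ are then nonnegative on all arcs, which after the substitution yields $x_i(a)\le 1$ on $P$; combined with $\sum_j x_j(a)\ge k-1$ this forces $x_i(a)\ge 0$ on $P$ for free, and the only remaining cleanup is dropping the redundant upper bound $x_i(a)\le 1$, which is exactly the benign cycle-removal direction. Note also that your dual region carries the extra constraint $\sum_i x_i(a)\le k+1$ on $P$ while the theorem's does not, so your strong-duality argument by itself does not even yield the weak-duality inequality $\min\ge\max$ for the theorem's formulation; that too needs the adequateness of an optimal $p$ (or a proof that the upper bound is never active). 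I would restructure the primal along the paper's lines before dualizing.
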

\begin{proof}
 By Theorem~\ref{thm:Duffin} and by \eqref{opt_p}, the inverse shortest $s$-$t$ path problem with one weight function $w$ can be formulated as
 \begin{equation*}
 \begin{array}{rll}
  \min & \displaystyle\sum_{uv \in A} \big| p(uv) \big| \\[15pt]
  \text{s.\,t.} & y(v) - y(u) = w(uv)-p(uv) & \quad \forall uv \in P , \\[5pt]
                & y(v) - y(u) \le w(uv)-p(uv) & \quad \forall uv \in A-P .
 \end{array}
 \end{equation*}
 Since the objective is to minimize the $\ell_1$-norm of $p$ and any optimal deviation vector is adequate, this can be reformulated as
 \begin{equation*}
 \begin{array}{rr@{}ll}
  \min & \multicolumn{3}{l}{\displaystyle\sum_{uv \in P} p(uv) - \sum_{uv \in A-P} p(uv)} \\[15pt]
  \text{s.\,t.} & y(v) - y(u) + p(uv) &\ge w(uv) & \quad \forall uv \in P , \\[5pt]
                & y(v) - y(u) + p(uv) &\le w(uv) & \quad \forall uv \in A-P , \\[5pt]
                & p(uv) &\ge 0                 & \quad \forall uv \in P , \\[5pt]
                & p(uv) &\le 0                 & \quad \forall uv \in A-P . \\
 \end{array}
 \end{equation*}
 Thus the inverse shortest $s$-$t$ path problem with multiple weight functions can be formulated as
 \begin{equation}
 \begin{array}{rr@{}ll@{}l} \label{eq:inverseshortestPath_Primal}
  \min & \multicolumn{3}{l}{\displaystyle\sum_{uv \in P} p(uv) - \sum_{uv \in A-P} p(uv)} \\[15pt]
  \text{s.\,t.} & y_i(v) - y_i(u) + p(uv) &\ge w_i(uv) & \quad \forall uv \in P , ~ \forall i \in [k], \\[5pt]
                & y_i(v) - y_i(u) + p(uv) &\le w_i(uv) & \quad \forall uv \in A-P , ~ \forall i \in [k], \\[5pt]
                & p(uv) &\ge 0                 & \quad \forall uv \in P , \\[5pt]
                & p(uv) &\le 0                 & \quad \forall uv \in A-P . \\
 \end{array}
 \end{equation}
 The dual of this problem is
 \begin{equation*}
 \begin{array}{rr@{}c@{}rl@{}l}
  \max & \multicolumn{3}{l}{\displaystyle \sum_{i \in [k]} \sum_{a \in P} w_i(a) \, \widehat{x}_i(a) - \sum_{i \in [k]} \sum_{a \in A-P} w_i(a) \, \widehat{x}_i(a)} \\[25pt]
  \text{s.\,t.} & \hspace{-3pt} \displaystyle\sum_{a \in \delta^-_P(v)} \hspace{-8pt} \widehat{x}_i(a) - \hspace{-3pt} \sum_{a \in \delta^+_P(v)} \hspace{-8pt} \widehat{x}_i(a) - \hspace{-3pt} \sum_{a \in \delta^-_{A-P}(v)} \hspace{-8pt} \widehat{x}_i(a) + \hspace{-3pt} \sum_{a \in \delta^+_{A-P}(v)} \hspace{-8pt} \widehat{x}_i(a) & \, = \, & 0 & \quad \forall v \in V , ~\forall i \in [k], \\[25pt]
                & \displaystyle\sum_{i \in [k]} \widehat{x}_i(a) + z(a) & \, = \, & 1 & \quad \forall a \in P , & \\[20pt]
                & \displaystyle -\sum_{i \in [k]} \widehat{x}_i(a) - z(a) & \, = \, & -1 & \quad \forall a \in A \! - \! P, & \\[15pt]
                & \widehat{x}_i(a) & \, \ge \, & 0 & \quad \forall a \in A, ~\forall i \in [k],\\[5pt]
                & z(a) & \, \ge \, & 0         & \quad \forall a \in A . \\
 \end{array}
 \end{equation*}
 Let us define $x_i(a) := 1 - \widehat{x}_i(a)$ for all $a \in P$, and $x_i(a) := \widehat{x}_i(a)$ otherwise for all $i \in [k]$. Observe that we can drop the variables $z(a)$ and write inequalities in the last two equations instead. Furthermore, whenever $a\in P$, the inequality $\widehat{x}_i(a)\geq 0$ transforms into $x_i(a)\leq 1$, while the equation $\sum_{i \in [k]} \widehat{x}_i(a) + z(a) = 1$ becomes $\sum_{i \in [k]} x_i(a) \ge k-1$; these together imply $x_i(a)\geq 0$. On the other hand, once $x_i(a)\geq 0$ is assumed for each $a\in P$, the inequality $x_i(a)\leq 1$ becomes redundant. Indeed, $x_i$ is an $s$-$t$ flow of value $1$, hence it can be decomposed into path-flows of total value $1$ and cycles. If $x_i(a)>1$ for some $a\in P$, then $a$ is necessarily contained in one of the cycles. As $w$ is conservative, the flow values can be decreased on the cycle without decreasing the objective value. Therefore the above system is equivalent to
 \begin{equation}
 \begin{array}{rr@{}c@{}rl@{}l} \label{eq:inverseshortestPath_Dual}
  \max & \multicolumn{3}{l}{\displaystyle\sum_{i \in [k]} w_i(P)-\sum_{i \in [k]} \sum_{a \in A} w_i(a) \, x_i(a)} \\[25pt]
  \text{s.\,t.} & \displaystyle\sum_{a\in\delta^-(v)} \hspace{-8pt} x_i(a) - \hspace{-3pt} \sum_{a\in\delta^+(v)} \hspace{-8pt} x_i(a) & \, = & 0 & \quad \forall v \in V - \{ s,t \} , ~ \forall i \in [k], \\[25pt]
                & \displaystyle\sum_{a\in\delta^-(s)} \hspace{-8pt} x_i(a) - \hspace{-3pt} \sum_{a\in\delta^+(s)} \hspace{-8pt} x_i(a) & \, = & -1 & \quad \forall i \in [k], \\[25pt]
                & \displaystyle\sum_{a\in\delta^-(t)} \hspace{-8pt} x_i(a) - \hspace{-3pt} \sum_{a\in\delta^+(t)} \hspace{-8pt} x_i(a) & \, = & 1 & \quad \forall i \in [k], \\[25pt]
                & \displaystyle\sum_{i \in [k]} x_i(a) & \, \ge & k-1 & \quad \forall a \in P , & \\[20pt]
                & \displaystyle \sum_{i \in [k]} x_i(a) & \, \le & 1 & \quad \forall a \in A - P, & \\[15pt]
                & x_i(a) & \, \ge & 0 & \quad \forall a \in A, ~ \forall i \in [k] . \\[5pt]
 \end{array}
 \end{equation}
 That is, $(x_1,\ldots,x_k)$ is a multi-commodity $s$-$t$ flow with total flow value at most $1$ on each $a\in A-P$ and at least $k-1$ on each $a\in P$, and $\min=\max$ follows by strong duality. As both the primal problem \eqref{eq:inverseshortestPath_Primal} and its dual \eqref{eq:inverseshortestPath_Dual} have small sizes, one can find an optimal $p$ and $x$ in polynomial time through linear programming.
\end{proof}

\begin{rem}
 Problems \eqref{eq:inverseshortestPath_Primal} and \eqref{eq:inverseshortestPath_Dual} do not necessarily have integer optimal solutions, not even if the weight functions $w_1, \ldots, w_k$ are integer-valued; for an example, see Figure~\ref{fig:inverseshortestPath_NonInteger}.
\end{rem}

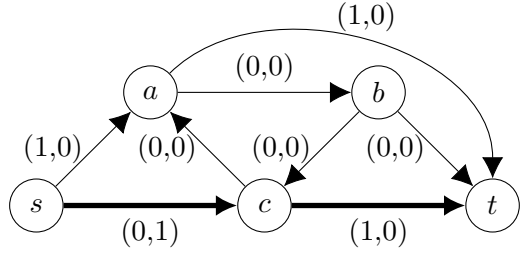
\begin{figure}[t]
\begin{center}
\begin{tikzpicture}[scale=1.5,>={Latex[length=8pt,width=8pt]}]
 \tikzstyle{vertex}=[draw,circle,minimum size=20,inner sep=1]
 
 \node[vertex] (s) at (0,0) {$s$};
 \node[vertex] (t) at (4,0) {$t$};
 \node[vertex] (a) at (1,1) {$a$};
 \node[vertex] (b) at (3,1) {$b$};
 \node[vertex] (c) at (2,0) {$c$};
 
 \draw[->] (a) -- (b) node[pos=0.5,above] {$(0{,}0)$};
 \draw[->] (a) to [out=45, in=90] node[pos=0.5,above] {$(1{,}0)$} (t);
 \draw[->] (s) -- (a) node[pos=0.5,left] {$(1{,}0)$};
 \draw[->,line width=2pt] (s) -- (c) node[pos=0.5,below] {$(0{,}1)$};
 \draw[->] (b) -- (c) node[pos=0.5,left] {$(0{,}0)$};
 \draw[->] (b) -- (t) node[pos=0.5,left] {$(0{,}0)$};
 \draw[->] (c) -- (a) node[pos=0.5,left] {$(0{,}0)$};
 \draw[->,line width=2pt] (c) -- (t) node[pos=0.5,below] {$(1{,}0)$};
\end{tikzpicture}
\caption{An example showing that an integer-valued optimal deviation vector does not necessarily exist for more than one conservative weight functions. Thick edges denote the input path $P$, while the numbers denote the weights $w_1$ and $w_2$ of the arcs. One can check that $\| p \|_1 \ge 2$ for any integer feasible deviation vector $p$. However, setting $p^*(sa) := -0.5$, $p^*(ab) := -0.5$, $p^*(ct) := 0.5$ and $p^*(e):=0$ for all other arcs $e$ results in a feasible deviation vector with $\| p^* \|_1 = 1.5$.}
\label{fig:inverseshortestPath_NonInteger}
\end{center}
\end{figure}

\section{Inverse bipartite perfect matching problem} \label{sec:bipartite}

In this section, we consider the inverse bipartite perfect matching problem. Similarly to shortest paths, first we study the case of one weight function, then we present a min-max theorem for the case of multiple weight functions.

\subsection{Inverse bipartite perfect matching problem with one weight function}

The inverse bipartite perfect matching problem with a single weight function and $\ell_1$-norm was solved in \cite{Hajdu2020thesis}. Again, we repeat the corresponding min-max theorem with its algorithmic proof.

\begin{thm}[Hajdu] \label{thm:minmax_bipartite_1weight}
 Let $G=(S,T;E)$ be a bipartite graph,  $M \subseteq E$ be a perfect matching, and $w: E \to \mathbb{R}$ be a weight function. Then
 \begin{eqnarray*}
    & \min \big\{ \| p \|_1 \bigm| \text{$M$ is a minimum weight perfect matching with respect to $w-p$} \big\}& \\
  &=  \max \big\{ w(M) - w(M') \bigm| \text{$M' \subseteq E$ is a perfect matching} \}.&
 \end{eqnarray*}
 Moreover, there always exists a mildly adequate optimal deviation vector $p$, which, in addition, can be chosen to be integer-valued when $w$ is integer-valued.
\end{thm}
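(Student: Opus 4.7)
The plan is to mimic the proof of Theorem~\ref{thm:minmax_shortest_path}, replacing Duffin's theorem by Egerv\'ary's theorem (Theorem~\ref{thm:egervary_bipartite}) and the potential characterization of shortest paths by the optimality conditions \eqref{opt_m} for minimum weight perfect matchings. The inequality $\min \geq \max$ is immediate: by Lemma~\ref{lem:p_lower_bound}, any feasible deviation vector $p$ satisfies $\|p\|_1 \geq w(M) - w(M')$ for every perfect matching $M'$, and in particular for a minimum weight one.

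For the reverse direction, let $M' \subseteq E$ be a minimum weight perfect matching with respect to $w$, and let $y: S \cup T \to \mathbb{R}$ be an optimal dual solution guaranteed by Theorem~\ref{thm:egervary_bipartite}, so that $y(u)+y(v) \leq w(uv)$ for every $uv \in E$ and $\sum_{v \in S \cup T} y(v) = w(M')$. Define
\[ p(uv) :=
   \begin{cases}
    w(uv) - \big( y(u) + y(v) \big) & \text{if } uv \in M, \\
    0 & \text{if } uv \in E - M.
   \end{cases} \]
Dual feasibility gives $p(uv) \geq 0$ on $M$ and $p \equiv 0$ on $E-M$, so $p$ is mildly adequate. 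To verify that $p$ is a feasible deviation vector, we check the conditions \eqref{opt_m} with the same $y$: on edges of $M$ we have $(w-p)(uv) = y(u)+y(v)$ by construction, while on edges of $E-M$ we have $(w-p)(uv) = w(uv) \geq y(u)+y(v)$ by dual feasibility. Hence $M$ is a minimum weight perfect matching with respect to $w - p$.

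It remains to evaluate $\|p\|_1$. Since $p$ is nonnegative and supported on $M$, and since $M$ is a perfect matching so that each vertex of $S \cup T$ is the endpoint of exactly one edge of $M$,
\[ \|p\|_1 = \sum_{uv \in M} \big( w(uv) - y(u) - y(v) \big) = w(M) - \sum_{v \in S \cup T} y(v) = w(M) - w(M'), \]
by strong duality. This establishes $\min = \max$, and it simultaneously exhibits a mildly adequate optimal deviation vector. For the integrality statement, when $w$ is integer-valued the second half of Theorem~\ref{thm:egervary_bipartite} allows us to choose $y$ integer-valued, in which case $p$ defined above is automatically integer-valued.

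I do not foresee a substantive obstacle: the argument is a direct transposition of the shortest path proof, and the only point that deserves care is the bookkeeping that converts $\sum_{uv \in M}(y(u)+y(v))$ into $\sum_{v \in S \cup T} y(v)$, which uses the fact that $M$ is perfect (so each vertex is covered exactly once). The algorithmic content is likewise trivial, as both $M'$ and an optimal dual $y$ can be computed in polynomial time by any standard weighted bipartite matching algorithm.
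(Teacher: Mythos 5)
Your proposal is correct and follows essentially the same route as the paper's own proof: the lower bound via Lemma~\ref{lem:p_lower_bound}, the construction of $p$ from an optimal Egerv\'ary dual $y$, the feasibility check via \eqref{opt_m}, the evaluation of $\|p\|_1$ using strong duality, and integrality from Theorem~\ref{thm:egervary_bipartite}. No gaps.
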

\begin{proof}
 Let $M' \subseteq E$ be a minimum weight perfect matching with respect to $w$. By Lemma~\ref{lem:p_lower_bound}, $\| p \|_1 \geq w(M) - w(M')$ holds for any feasible deviation vector $p$, showing $\min\geq\max$.
 
 To see the reverse inequality, recall that the minimum weight bipartite perfect matching problem can be formulated as
 \begin{equation*} 
 \begin{array}{rll@{}c@{}ll}
  \min & \multicolumn{2}{l}{\displaystyle \sum_{e \in E} w(e) \, x(e)} & & \\[20pt]
  \text{s.\,t.} & \displaystyle \sum_{e \in \delta(v)} & x(e)  \; = \; & 1 & \qquad \forall v \in S \cup T , \\[25pt]
   & & x(e)  \; \ge \; & 0 & \qquad \forall e \in E.
 \end{array}
 \end{equation*}
The dual program is
 \begin{equation} \label{eq:minweightbipartitePM_Dual}
 \begin{array}{rlr}
  \max & \displaystyle \sum_{v \in S \cup T} y(v) \\[15pt]
  \text{s.\,t.} & y(u) + y(v) \leq w(uv) & \qquad \forall uv \in E .
 \end{array}
 \end{equation}
 
 Let $M' \subseteq E$ be a minimum weight perfect matching with respect to $w$, and $y$ be an optimal solution to the dual problem \eqref{eq:minweightbipartitePM_Dual}. Define
 \[p(uv)= \begin{cases}
  w(uv) - \big( y(u) + y(v) \big) & \text{if $uv \in M$,} \\
  0 & \text{otherwise.}
 \end{cases} \]
 By \eqref{opt_m}, $M$ is a minimum weight perfect matching with respect to $w-p$, i.e., $p$ is a feasible deviation vector. By the constraints of the dual program \eqref{eq:minweightbipartitePM_Dual}, $p$ is mildly adequate. Hence
 \begin{align*}
 \| p \|_1 
 {}&{}= 
 \sum_{uv \in E} \big| p(uv) \big| \\
{}&{}= 
\sum_{uv \in M} w(uv) - \big( y(u) + y(v) \big)\\
{}&{}= 
w(M) - \sum_{v \in S \cup T} y(v) \\
{}&{}= 
w(M) - w(M') ,
 \end{align*}
 where the last equality follows by strong duality. Therefore $\min=\max$ holds. When $w$ is integer-valued, then the optimal dual solution $y$ can also be chosen to be integer-valued by Theorem~\ref{thm:egervary_bipartite}, proving the second half of the theorem.
\end{proof}

\subsection{Inverse bipartite perfect matching problem with multiple weight functions}

Our goal is to give a min-max formula for the inverse bipartite perfect matching problem with multiple weight functions. Recall that, in such a problem, weight functions $w_1, \ldots, w_k:E\to\mathbb{R}$ are given, and the goal is to find a deviation vector $p: E\to\mathbb{R}$ such that the given perfect matching becomes a minimum weight perfect matching with respect to the revised weight functions $w_1-p, \ldots, w_k-p$, while $\|p\|_1$ is minimized.

The following theorem provides a min-max characterization of the optimum value.

\begin{thm} \label{thm:minmax_bipartite_multiple} 
 Let $G=(S,T;E)$ be a bipartite graph, $w_1,\dots,w_k:E\to\mathbb{R}$ be weight functions, and $M\subseteq E$ be a perfect matching. Then
 \begin{eqnarray*}
    & \min \left\{ \| p \|_1 \bigm| \text{$M$ is a minimum weight perfect matching with respect to $w_i - p$ for $i\in [k]$} \right\} &\\[10pt]
  & =  \max \left\{ \displaystyle\sum_{i \in [k]} \left(w_i(M) - \displaystyle\sum_{e\in E}w_i(e)x_i(e)\right) \middle| \text{\parbox{.49\textwidth}{
  $x_1, \ldots, x_k$ are fractional perfect matchings covering each $e \in M$ at least $k-1$ times in total}} \right\} \! .&
 \end{eqnarray*}
Moreover, both an optimal deviation vector $p$ and set of fractional perfect matchings $x_1,\dots,x_k$ can be determined in polynomial time.
\end{thm}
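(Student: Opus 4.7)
The plan is to mirror closely the linear programming based argument used in the proof of Theorem~\ref{thm:main}, replacing Duffin's optimality conditions \eqref{opt_p} by Egerváry's conditions \eqref{opt_m}. Concretely, using Theorem~\ref{thm:egervary_bipartite}, the statement that $M$ is a minimum weight perfect matching with respect to $w_i-p$ translates into the existence of a dual potential $y_i\colon S\cup T\to\mathbb{R}$ satisfying tight equalities on $M$ and inequalities on $E-M$. Since any optimal deviation is adequate, the objective $\|p\|_1$ may be rewritten as $\sum_{e\in M}p(e)-\sum_{e\in E-M}p(e)$, which turns the inverse problem into a concrete LP in the variables $p\colon E\to\mathbb{R}$ and $y_1,\dots,y_k\colon S\cup T\to\mathbb{R}$, with sign constraints $p\ge 0$ on $M$ and $p\le 0$ on $E-M$.

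Next I would take the LP dual, introducing nonnegative multipliers $\widehat{x}_i(e)$ for the optimality-style inequalities and $z(e)$ for the sign constraints. The dual constraint coming from the free variable $y_i(v)$ reads
\[
\sum_{e\in\delta(v)\cap M}\widehat{x}_i(e)\;-\sum_{e\in\delta(v)\cap(E-M)}\widehat{x}_i(e)\;=\;0,
\]
while the constraints arising from $p(e)$ read $\sum_i \widehat{x}_i(e)+z(e)=1$ for $e\in M$ and the analogous equality for $e\in E-M$. I would then perform the substitution $x_i(e):=1-\widehat{x}_i(e)$ for $e\in M$ and $x_i(e):=\widehat{x}_i(e)$ for $e\in E-M$, exactly as in Theorem~\ref{thm:main}. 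Using that $M$ is a \emph{perfect} matching, so $|\delta(v)\cap M|=1$ for every $v\in S\cup T$, the vertex-constraint collapses to $\sum_{e\in\delta(v)}x_i(e)=1$, i.e.\ each $x_i$ becomes a fractional perfect matching. The edge-constraint on $M$ becomes $\sum_i x_i(e)=k-1+z(e)\ge k-1$, while the objective transforms into $\sum_{i\in[k]}\!\bigl(w_i(M)-\sum_{e\in E}w_i(e)x_i(e)\bigr)$, so strong LP duality yields $\min=\max$.

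The main subtlety, and the step I expect to require the most care, is verifying that the system obtained after substitution is genuinely equivalent to the one stated in the theorem, since the statement mentions only the lower bound on $M$. The non-negativity $x_i(e)\ge 0$ on $M$ and the upper bound $\sum_i x_i(e)\le 1$ on $E-M$ are not listed explicitly but can be recovered: from $x_j(e)\le 1$ for $j\neq i$ (which follows from each $x_j$ being a fractional perfect matching in a bipartite graph) together with $\sum_j x_j(e)\ge k-1$, one obtains $x_i(e)\ge 0$ on $M$; and at any vertex $v$ with matching edge $e^\ast$, summing $\sum_{e\in\delta(v)}x_i(e)=1$ over $i$ gives $\sum_i\sum_{e\in\delta(v)}x_i(e)=k$, whence $\sum_i x_i(e)\le 1$ for every non-matching edge of $\delta(v)$. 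Finally, both the primal and its dual have polynomial size, so an optimal $p$ and fractional matchings $x_1,\dots,x_k$ can be computed in polynomial time by any polynomial LP solver.
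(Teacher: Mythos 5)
Your proof is correct and follows essentially the same LP-duality route as the paper: encode feasibility of $p$ via Egerv\'ary's optimality conditions \eqref{opt_m}, dualize, and apply the substitution $x_i(e):=1-\widehat{x}_i(e)$ on $M$ before invoking strong duality. The one divergence is that the paper first invokes Remark~\ref{rem:adequate} (via Theorem~\ref{thm:adequate}) to restrict attention to mildly adequate deviation vectors, so its primal has no variables $p(e)$ for $e\in E-M$ and its dual never produces the constraint $\sum_{i}x_i(e)\le 1$ on $E-M$; you instead keep the full adequate formulation with $p\le 0$ on $E-M$ and then verify that this extra constraint, as well as $x_i(e)\ge 0$ on $M$, is implied by the degree equalities and the bound $\sum_i x_i(e)\ge k-1$. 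Both variants are sound, and your redundancy check is precisely the verification the paper leaves implicit when it asserts that the transformed dual ``is equivalent to'' the stated system (one small wording issue: for $x_i(e)\ge 0$ on $e\in M$ you should cite the dual constraint $x_j(e)\le 1$ coming from $\widehat{x}_j(e)\ge 0$ rather than the not-yet-established fact that $x_j$ is a fractional perfect matching, but the inequality you use is available either way).
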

\begin{proof}
As explained in Remark~\ref{rem:adequate}, there always exists a mildly adequate optimal deviation vector. Based on this observation, Egerv\'{a}ry's theorem (Theorem~\ref{thm:egervary_bipartite}) and \eqref{opt_m}, the inverse bipartite perfect matching problem with multiple weight functions can be formulated as
\begin{equation}
 \begin{array}{rr@{}ll@{}l} \label{eq:inversebipartitePM_Primal}
  \min & \multicolumn{3}{l}{\displaystyle\sum_{uv \in M} p(uv)} \\[15pt]
  \text{s.\,t.} & y_i(u) + y_i(v) + p(uv) &\ge w_i(uv) & \quad \forall uv \in M , ~ \forall i \in [k], \\[5pt]
                & y_i(u) + y_i(v) &\le w_i(uv) & \quad \forall uv \in E-M , ~ \forall i \in [k], \\[5pt]
                & p(uv) &\ge 0                 & \quad \forall uv \in M . \\
 \end{array}
\end{equation}
After applying a variable transformation similar to the case of the $s$-$t$ path problem (i.e. writing up the dual problem with variables $\widehat{x}_i(e)$ for $i\in[k]$ and $e\in E$, and then setting $x_i(e):=1-\widehat{x}_i(e)$ for $e\in M$ and $x_i(e):=\widehat{x}_i(e)$ otherwise), the dual program is equivalent to
\begin{equation}
 \begin{array}{rr@{}c@{}rll} \label{eq:inversebipartitePM_Dual}
  \max & \multicolumn{4}{l}{\displaystyle \sum_{i \in [k]} w_i(M) - \sum_{i \in [k]} \sum_{e \in E} w_i(e) \, x_i(e)} \\[25pt]
  \text{s.\,t.} & \displaystyle\sum_{e \in \delta(v)}  x_i(e) & \, = \, & 1 & & \forall v \in S \cup T ,  ~ \forall i \in [k], \\[25pt]
                & \displaystyle\sum_{i \in [k]} x_i(e) & \, \ge \, & k-1 & & \forall e \in M , \\[20pt]
                & x_i(e) & \, \ge \, & 0 & & \forall e \in E,  ~ \forall i \in [k] . \\[5pt]
 \end{array}
\end{equation}
That is, $x_1,\ldots,x_k$ correspond to fractional matchings that cover every $e\in M$ at least $k-1$ times in total, and $\min=\max$ follows by strong duality. As both the primal problem \eqref{eq:inversebipartitePM_Primal} and its dual \eqref{eq:inversebipartitePM_Dual} have small sizes, one can find an optimal $p$ and $x$ in polynomial time through linear programming.
\end{proof}

\begin{rem}
Problems \eqref{eq:inversebipartitePM_Primal} and \eqref{eq:inversebipartitePM_Dual} do not necessarily have integer optimal solutions, not even if the weight functions $w_1, \ldots, w_k$ are integer-valued; for an example, see Figure~\ref{fig:inversebipartitePM_NonInteger}.
\end{rem}

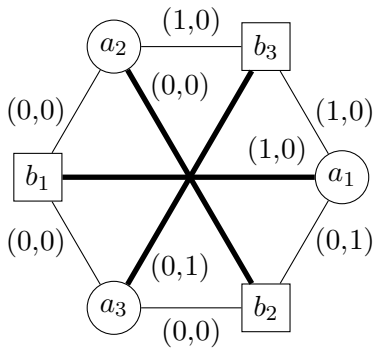
\begin{figure}[t]
\begin{center}
\begin{tikzpicture}[scale=1]
 \tikzstyle{vertex1}=[draw,circle,minimum size=20,inner sep=1]
 \tikzstyle{vertex2}=[draw,rectangle,minimum size=18,inner sep=1]
 
 \node[vertex1] (a1) at (0*60:2) {$a_1$};
 \node[vertex2] (b3) at (1*60:2) {$b_3$};
 \node[vertex1] (a2) at (2*60:2) {$a_2$};
 \node[vertex2] (b1) at (3*60:2) {$b_1$};
 \node[vertex1] (a3) at (4*60:2) {$a_3$};
 \node[vertex2] (b2) at (5*60:2) {$b_2$};
 
 \draw (a1) -- (b3) node[pos=0.5,right] {$(1{,}0)$};
 \draw[line width=2pt] (a1) -- (b1) node[pos=0.15,above] {$(1{,}0)$};
 \draw (a1) -- (b2) node[pos=0.5,right] {$(0{,}1)$};
 \draw[line width=2pt] (b3) -- (a3) node[pos=0.92,right] {$(0{,}1)$};
 \draw (b3) -- (a2) node[pos=0.5,above] {$(1{,}0)$};
 \draw (a2) -- (b1) node[pos=0.5,left] {$(0{,}0)$};
 \draw[line width=2pt] (a2) -- (b2) node[pos=0.08,right] {$(0{,}0)$};
 \draw (b1) -- (a3) node[pos=0.5,left] {$(0{,}0)$};
 \draw (a3) -- (b2) node[pos=0.5,below] {$(0{,}0)$};
\end{tikzpicture}
\caption{An example showing that an integer-valued optimal deviation vector does not necessarily exist for more than one weight functions. Thick edges denote the input perfect matching $M$, while the numbers denote the weights $w_1$ and $w_2$ of the edges. One can check that $\| p \|_1 \ge 2$ holds for any feasible integer deviation vector $p$. However, setting $p^*(a_1 b_1) := 0.5$, $p^*(a_2 b_2) := 0.5$, $p^*(a_3 b_3) := 0.5$ and $p^*(e):=0$ for all other edges $e$ results in a feasible deviation vector with $\| p^* \|_1 = 1.5$.}
\label{fig:inversebipartitePM_NonInteger}
\end{center}
\end{figure}

\section{Inverse arborescence problem \label{sec:arborescence}}

In this section, we consider the inverse arborescence problem. Again, first we study the case of one weight function, then we present a min-max theorem for the case of multiple weight functions.

\subsection{Inverse arborescence problem with one weight function} \label{subsec:arborescenceSingle}

The inverse arborescence problem with one \emph{nonnegative} weight function and $\ell_1$-norm was recently solved by Frank and Hajdu \cite{frank2021inverse}. They gave a min-max result and a two-phase greedy algorithm for determining an optimal deviation vector which is, in addition, strongly adequate.

\begin{thm}[Frank and Hajdu] \label{thm:frankhajdu}
 Let $D=(V,A)$ be a digraph, $r \in V$ be a vertex with in-degree $0$, $F \subseteq A$ be an $r$-arborescence, and $w: A \to \mathbb{R}_+$ be a weight function. Then
 \begin{eqnarray*}
    & \min \big\{ \| p \|_1 \bigm| \text{$F$ is a minimum $(w-p)$-weight $r$-arborescence} \big\} &\\
  &=  \max \big\{ w(F) - w(L) \bigm| \text{$L \subseteq A$, $L$ is a cover of $\mathcal{F}$} \big\} ,&
 \end{eqnarray*}
 where $\mathcal{F}:= \big\{ Z \subseteq V-r \bigm| d^-_F (Z)=1 \big\}$. Moreover, there always exists a strongly adequate optimal deviation vector $p$, which, in addition, can be chosen to be integer-valued when $w$ is integer-valued.
\end{thm}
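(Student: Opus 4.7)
The plan is to derive the min--max identity from the Fulkerson optimality certificate \eqref{opt_a} via LP duality, with Frank's theorem on intersecting (kernel) systems supplying the required integrality. First, by Theorem~\ref{thm:adequate} and Remark~\ref{rem:adequate} (for each $e\in A-F$, the arc of $F$ with the same head as $e$ is a proper choice), we may restrict to \emph{mildly adequate} feasible $p$, that is $p\ge 0$ on $F$ and $p\equiv 0$ on $A-F$.

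For the easy direction $\min\ge\max$, I would take an arbitrary adequate feasible $p$ together with an optimality certificate $y\ge 0$ for $F$ under weight $w-p$ supplied by \eqref{opt_a}, supported on $\mathcal{F}$. For any cover $L$ of $\mathcal{F}$,
\begin{align*}
 w(L)-p(L)\;\ge\;\sum_{a\in L}\sum_{Z:\,a\in\delta^-(Z)} y(Z)\;=\;\sum_{Z}y(Z)\,d^-_L(Z)\;\ge\;\sum_{Z\in\mathcal{F}}y(Z)\;=\;w(F)-p(F),
\end{align*}
using dual feasibility, the cover condition $d^-_L(Z)\ge 1$ on $\mathcal{F}$, and tightness on $F$. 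Adequateness of $p$ then yields $w(F)-w(L)\le p(F)-p(L)=\sum_{a\in F-L}|p(a)|+\sum_{a\in L-F}|p(a)|\le\|p\|_1$.

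For the matching upper bound I would pick an optimal solution $y^*$ of the packing LP
\begin{align*}
 \max\Bigl\{\sum_{Z\in\mathcal{F}}y(Z)\ \Big|\ y\ge 0,\ \sum_{Z:\,a\in\delta^-(Z)}y(Z)\le w(a)\ \text{for all }a\in A\Bigr\},
\end{align*}
and set $p(a):=w(a)-\sum_{Z:\,a\in\delta^-(Z)}y^*(Z)$ for $a\in F$, $p(a):=0$ otherwise. Dual feasibility gives $p\ge 0$ on $F$ and $w-p\ge 0$ everywhere, so $p$ is \emph{strongly adequate}, and $y^*$ is a valid certificate for $F$ being $(w-p)$-minimum via \eqref{opt_a}. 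A direct computation yields $\|p\|_1=p(F)=w(F)-\sum_{Z\in\mathcal{F}}y^*(Z)$, and LP duality equates $\sum_{Z}y^*(Z)$ with the fractional minimum weight of a cover of $\mathcal{F}$.

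The main obstacle is passing from the fractional optimum to an integer cover, which is exactly the content of Frank's theorem on intersecting supermodular (kernel) systems. The first step is to verify that $\mathcal{F}$ is intersecting: if $Z_1,Z_2\in\mathcal{F}$ meet, submodularity of $d^-_F$ forces $d^-_F(Z_1\cup Z_2)+d^-_F(Z_1\cap Z_2)\le 2$, and since each term is at least $1$, both sets lie in $\mathcal{F}$. Frank's theorem then guarantees total dual integrality of the covering LP on $\mathcal{F}$, so the fractional optimum coincides with $\min\{w(L):L\subseteq A\text{ is a cover of }\mathcal{F}\}$, giving the stated min--max identity, and, when $w$ is integer-valued, it provides an integer optimal $y^*$ and hence an integer-valued strongly adequate $p$.
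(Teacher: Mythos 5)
The paper does not actually prove Theorem~\ref{thm:frankhajdu}: it quotes it from Frank and Hajdu, whose own proof is a combinatorial two-phase greedy argument, and the paper only gives an LP-duality proof of a weakened version with minimal \emph{fractional} covers. Your architecture --- build a strongly adequate $p$ from an optimal fractional packing $y^*$ of $\mathcal{F}$, then upgrade to integer covers and integer $y^*$ via total dual integrality of the covering system of the intersecting family $\mathcal{F}$ --- is therefore a different but entirely reasonable route, and most of it is sound: the construction of $p$ (including the check that $w-p\ge 0$, so $p$ is strongly adequate and $y^*$ extended by zero certifies optimality via \eqref{opt_a}), the verification that $\mathcal{F}$ is intersecting, and the appeal to Frank's kernel-system theorem for integrality of both the primal cover and the dual packing are all correct.

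The gap is in your $\min\ge\max$ argument. The certificate supplied by \eqref{opt_a} is \emph{not} nonnegative in general: nonnegativity is imposed only on sets $Z$ with $|Z|>1$, and on singletons $y(\{v\})$ can be forced to be negative, because for a feasible adequate $p$ the weight $w-p$ may be negative on arcs of $F$ (adequateness only guarantees $w-p\ge w\ge 0$ on $A-F$). Your step $\sum_Z y(Z)\,d^-_L(Z)\ge\sum_{Z\in\mathcal{F}}y(Z)$ then fails whenever $y(\{v\})<0$ and $d^-_L(v)\ge 2$. Concretely, take two parallel arcs $a_1,a_2$ from $r$ to $x$ with $w(a_1)=10$, $w(a_2)=0$, $F=\{a_1\}$, and the feasible adequate vector $p(a_1)=11$, $p(a_2)=0$: then \eqref{opt_a} forces $y(\{x\})=-1$, and for the cover $L=\{a_1,a_2\}$ your chain asserts $-2\ge -1$. (The theorem's inequality $w(F)-w(L)=0\le 11$ still holds, of course; only your derivation breaks.) The repair needs two observations you omitted: first, since $w\ge 0$, the maximum on the right-hand side is attained at an inclusionwise minimal cover, so it suffices to bound against those; second, every inclusionwise minimal cover $L$ of $\mathcal{F}$ satisfies $d^-_L(v)=1$ for all $v\in V-r$. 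Indeed, if distinct $g_1,g_2\in L$ both entered $v$, minimality would give sets $Z_1,Z_2\in\mathcal{F}$ with $\delta^-_L(Z_i)=\{g_i\}$; then $v\in Z_1\cap Z_2$, the tail of $g_1$ lies in $Z_2-Z_1$ and that of $g_2$ in $Z_1-Z_2$, so $d^-_L(Z_1\cap Z_2)\ge 2$, while $Z_1\cup Z_2\in\mathcal{F}$ (by the intersecting property you proved) gives $d^-_L(Z_1\cup Z_2)\ge 1$, contradicting submodularity of $d^-_L$ applied to $d^-_L(Z_1)+d^-_L(Z_2)=2$. With $d^-_L\equiv 1$ on singletons the negative singleton values of $y$ become harmless and your chain goes through; without this step the lower bound is unproved.
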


Assuming the weight function to be nonnegative is not restrictive as the weights can be shifted by the same value resulting in an equivalent problem, hence the algorithm of \cite{frank2021inverse} can be applied for arbitrary weight functions. Nevertheless, the min-max relation does not hold anymore as every edge of negative weight is worth adding to $L$. One can observe that, in Theorem~\ref{thm:frankhajdu}, $L$ can be chosen to be an inclusionwise minimal cover of $\mathcal{F}$ due to the nonnegativity of $w$. With this extra constraint, the min-max result extends to the arbitrary weights setting. In what follows, we give a new, LP based proof of a weakening of Theorem~\ref{thm:frankhajdu} where fractional minimal covers are considered instead of integer ones. In this sense, a fractional cover of $\mathcal{F}$ is \emph{minimal} if its total value is $|V|-1$.

\begin{thm}[Frank and Hajdu]
 Let $D=(V,A)$ be a digraph, $r \in V$ a vertex with in-degree $0$, $F \subseteq A$ an $r$-arborescence, and $w: A \to \mathbb{R}$ a weight function. Then
 \begin{eqnarray*}
    & \min \big\{ \| p \|_1 \bigm| \text{$F$ is a minimum weight $r$-arborescence with respect to $w-p$} \big\} &\\
  &= \max \big\{ w(F) - \sum_{a\in A}w(a)x(a) \bigm| \text{$x$ is a minimal fractional cover of $\mathcal{F}$} \big\},
 \end{eqnarray*}
where $\mathcal{F}:= \big\{ Z \subseteq V-r \bigm| d^-_F (Z)=1 \big\}$. Moreover, there always exists a mildly adequate optimal deviation vector $p$.
\end{thm}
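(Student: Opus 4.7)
The plan is to mirror the LP-duality strategy used in the proofs of Theorems~\ref{thm:main} and~\ref{thm:minmax_bipartite_multiple}, but with Fulkerson's theorem (Theorem~\ref{thm:fulkerson}) and its optimality conditions~\eqref{opt_a} playing the role that Duffin's and Egerv\'ary's theorems played there. First, I would invoke Theorem~\ref{thm:adequate} via the observation already noted in Remark~\ref{rem:adequate}: for every $e \in A-F$, the unique arc $f \in F$ sharing the same head as $e$ has the property that no $r$-arborescence contains both $e$ and $f$. This yields a mildly adequate optimal deviation vector, so we may assume throughout that $p(a) \ge 0$ for $a \in F$ and $p(a) = 0$ for $a \in A-F$, whence $\|p\|_1 = \sum_{a \in F} p(a)$.

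For such $p$, the complementary-slackness part of~\eqref{opt_a} (specifically, $y(Z)=0$ whenever $d^-_F(Z)>1$) implies that the dual multipliers $y$ certifying optimality of $F$ may be restricted to be supported on $\cF$. Consequently, the inverse problem can be written as the linear program
\begin{equation*}
\begin{array}{rr@{}c@{}rl}
\min & \multicolumn{4}{l}{\displaystyle \sum_{a \in F} p(a)} \\[10pt]
\text{s.t.} & p(a) + \displaystyle\sum_{\substack{Z \in \cF:\\ a \in \delta^-(Z)}} y(Z) & \, = \, & w(a) & \quad \forall a \in F, \\[20pt]
 & \displaystyle\sum_{\substack{Z \in \cF:\\ a \in \delta^-(Z)}} y(Z) & \, \le \, & w(a) & \quad \forall a \in A-F, \\[20pt]
 & y(Z) & \, \ge \, & 0 & \quad \forall Z \in \cF \text{ with } |Z| > 1, \\[3pt]
 & p(a) & \, \ge \, & 0 & \quad \forall a \in F,
\end{array}
\end{equation*}
while the variables $y(\{v\})$ for $v \in V-r$ remain free.

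Taking the dual with multipliers $x(a)$ for $a \in A$ and then substituting $\tilde{x}(a) := 1 - x(a)$ for $a \in F$ and $\tilde{x}(a) := x(a)$ for $a \in A-F$, exactly as in the proofs of Theorems~\ref{thm:main} and~\ref{thm:minmax_bipartite_multiple}, the dual constraints arising from the free variables $y(\{v\})$ become equalities $\sum_{a \in \delta^-(v)} \tilde{x}(a) = 1$ for each $v \in V-r$, while those coming from $y(Z)$ with $|Z|>1$ become $\sum_{a \in \delta^-(Z)} \tilde{x}(a) \ge 1$, and the constraint $x(a) \le 1$ dual to $p(a) \ge 0$ forces $\tilde{x}(a) \ge 0$ on $F$. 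Because $r$ has in-degree zero, every arc has head in $V-r$, so summing the singleton equalities gives $\sum_{a \in A} \tilde{x}(a) = |V|-1$, which exhibits $\tilde{x}$ as a minimal fractional cover of $\cF$; the objective then collapses to $w(F) - \sum_{a \in A} w(a)\tilde{x}(a)$, and strong LP duality yields the claimed identity. The main obstacle I expect is the careful bookkeeping at two points: first, justifying that $y$ may be restricted to be supported on $\cF$ without loss of generality (which follows from~\eqref{opt_a}); and second, tracking the sign conventions in the dualization so that after the substitution $\tilde{x} = 1 - x$ on $F$ the variables $\tilde{x}$ become nonnegative, the inequalities acquire the orientation of a covering condition, and the objective carries the correct sign.
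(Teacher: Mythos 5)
Your proposal is correct and follows essentially the same route as the paper's proof: both reduce to a mildly adequate deviation vector via Remark~\ref{rem:adequate}, formulate the inverse problem as an LP with dual multipliers supported on $\cF$ using \eqref{opt_a}, dualize with the substitution $x(a):=1-\widehat{x}(a)$ on $F$, and read off minimality of the fractional cover from the singleton equalities together with the in-degree-$0$ assumption on $r$. The bookkeeping points you flag are exactly the ones the paper handles, and they work out as you describe.
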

\begin{proof}
As mentioned in Remark~\ref{rem:adequate}, there always exists a mildly adequate deviation vector for the inverse arborescence problem. Using this, Theorem~\ref{thm:fulkerson}, and \eqref{opt_a}, the inverse arborescence problem with one weight function can be formulated as
 \begin{equation*} 
 \begin{array}{rr@{}c@{}ll}
 \min & \multicolumn{2}{l}{\displaystyle\sum_{a \in F} \, p(a)} \\[15pt]
  \text{s.\,t.} & \displaystyle \sum_{\substack{Z \in \mathcal{F}: \\ a\in\delta^-(Z)}} y(Z) + p(a) & \; = \; & w(a) & \qquad \forall a \in F, \\[25pt]
                & \displaystyle \sum_{\substack{Z \in \mathcal{F}: \\ a\in\delta^-(Z)}} y(Z) & \; \le \; & w(a) & \qquad \forall a \in A-F, \\[25pt]
                & y(Z) & \; \ge \; & 0 & \qquad \forall Z \in \mathcal{F}' \\[5pt]
                & p(a) & \; \ge \; & 0 & \qquad \forall a \in F ,
 \end{array}
 \end{equation*}
 where $\mathcal{F}:= \big\{ Z \subseteq V-r \bigm| d^-_F (Z)=1 \big\}$ and $\mathcal{F}' := \big\{ Z \subseteq V-r \bigm| |Z| > 1, \, d^-_F (Z)=1 \big\}$.
After applying a variable transformation similar to the case of the $s$-$t$ path problem (i.e. writing up the dual problem with variables $\widehat{x}(a)$ for $a\in A$, and then setting $x(a):=1-\widehat{x}(a)$ for $a \in F$ and $x(a):=\widehat{x}(a)$ otherwise), the dual program is equivalent to
 \begin{equation} \label{eq:arborescence_pre_Dual_singlecost}
 \begin{array}{rll@{}c@{}ll}
  \max & \multicolumn{4}{l}{\displaystyle w(F) - \sum_{a \in A} \, w(a) \, x(a)} & \\[25pt]
  \text{s.\,t.} & \displaystyle \sum_{a\in\delta^-(Z)} &x(a) & \; \ge \; & 1 & \quad \forall Z \in \mathcal{F}', \\[25pt]
   & \displaystyle \sum_{a\in\delta^-(v)} &x(a) & \; = \; & 1 & \quad \forall v \in V-r , \\[20pt]
   & & x(a) & \; \ge \; & 0 & \quad \forall a \in A.
 \end{array}
 \end{equation}
  Therefore, by strong duality, it suffices to show that an optimal solution $x$ of \eqref{eq:arborescence_pre_Dual_singlecost} corresponds to a minimal fractional cover of $\mathcal{F}$. This follows from the fact that $\sum_{a\in\delta^-(v)} x(a)=1$ for all $v\in V-r$ implies $x(A)=|V|-1$ as the in-degree of $r$ is $0$ by assumption.  
\end{proof}

\subsection{Inverse arborescence problem with multiple weight functions}

Our goal is to give a min-max formula for the inverse arborescence problem with multiple weight functions.
Recall that, in such a problem, weight functions $w_1, \ldots, w_k :A \to \mathbb{R}$ are given, and the goal is to find a deviation vector $p: A \to \mathbb{R}$ such that the given arborescence becomes a minimum weight $r$-arborescence with respect to the revised weight functions $w_1-p, \ldots, w_k-p$, while $\|p\|_1$ is minimized. Similarly to the previous section, we denote by $\mathcal{F}:= \big\{ Z \subseteq V-r \bigm| d^-_F (Z)=1 \big\}$ and $\mathcal{F}' := \big\{ Z \subseteq V-r \bigm| |Z| > 1, \, d^-_F (Z)=1 \big\}$.

The following theorem provides a min-max characterization of the optimum value.

\begin{thm} \label{thm:minmax_arborescence_multiple}
 Let $D=(V,A)$ be a digraph, $r \in V$ be a vertex with in-degree $0$, $w_1, \ldots, w_k: A \subseteq \mathbb{R}$ be weight functions, and $F \subseteq A$ be an $r$-arborescence. Then
 \begin{eqnarray*}
 & \min \big\{ \| p \|_1 \bigm| \text{$F$ is a minimum weight $r$-arborescence with respect to $w_i-p$ for $i\in[k]$} \big\} &\\
 & =  \max \left\{ \displaystyle \sum_{i \in [k]} \left(w_i(F) - \sum_{a\in A}w_i(a)x_i(a)\right) \; \middle| \; \text{\parbox{.4\textwidth}{$x_1, \ldots, x_k \subseteq A$ are minimal fractional covers of $\mathcal{F}$ covering each $a \in F$ at least $k-1$ times in total}} \right\} .&
 \end{eqnarray*}
 Moreover, an optimal set of fractional covers $x_1,\dots,x_k$ can be determined in polynomial time.
\end{thm}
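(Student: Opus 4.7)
The strategy mirrors the LP-duality arguments of Theorem~\ref{thm:main} and Theorem~\ref{thm:minmax_bipartite_multiple}, adapted to the arborescence setting. Remark~\ref{rem:adequate} applies here: for every $e=uv\in A-F$ the arc $f\in F$ sharing the head $v$ is a valid choice, so there exists a mildly adequate optimal deviation vector, and we may restrict attention to $p\ge 0$ supported on $F$ with $p\equiv 0$ on $A-F$.

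Combining this with Fulkerson's theorem (Theorem~\ref{thm:fulkerson}) and the optimality system \eqref{opt_a}, the inverse problem becomes the linear program in variables $p(a)\ge 0$ for $a\in F$ and $y_i(Z)$ for $Z\in\cF$, $i\in[k]$ (with $y_i(Z)\ge 0$ for $Z\in\cF'$): minimize $\sum_{a\in F} p(a)$ subject to $\sum_{Z\in\cF:\,a\in\delta^-(Z)} y_i(Z) + p(a) = w_i(a)$ for all $a\in F$, $i\in[k]$, and $\sum_{Z\in\cF:\,a\in\delta^-(Z)} y_i(Z) \le w_i(a)$ for all $a\in A-F$, $i\in[k]$. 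Writing the dual with variables $\widehat{x}_i(a)$ and performing the same substitution as in Section~\ref{subsec:arborescenceSingle} — namely $x_i(a):=1-\widehat{x}_i(a)$ for $a\in F$ and $x_i(a):=\widehat{x}_i(a)$ for $a\in A-F$ — reshapes the dual into a compact form amenable to combinatorial interpretation.

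Under this substitution, for each fixed $i\in[k]$ the dual constraint associated with the free singleton variable $y_i(\{v\})$ becomes $\sum_{a\in\delta^-(v)} x_i(a) = 1$ for every $v\in V-r$, the constraint associated with $y_i(Z)\ge 0$ for $Z\in\cF'$ becomes the cut inequality $\sum_{a\in\delta^-(Z)} x_i(a) \ge 1$, and nonnegativity of $x_i$ is preserved; summing the equalities over $v$ gives $x_i(A)=|V|-1$. Hence each $x_i$ is individually a minimal fractional cover of $\cF$, exactly as in~\eqref{eq:arborescence_pre_Dual_singlecost}. The genuinely new ingredient is the dual constraint from the shared variable $p(a)$ for $a\in F$: since $p(a)$ appears with coefficient $1$ in all $k$ equalities indexed by $i\in[k]$, the dual constraint reads $\sum_{i\in[k]} \widehat{x}_i(a)\le 1$, which under the substitution becomes $\sum_{i\in[k]} x_i(a)\ge k-1$ — precisely the coupling in the theorem. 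The transformed objective reads $\sum_{i\in[k]}\bigl(w_i(F) - \sum_{a\in A} w_i(a)\,x_i(a)\bigr)$, so strong duality delivers $\min=\max$.

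For the polynomial-time claim, the transformed dual has only $k|A|$ variables, polynomially many equality and coupling constraints, and a single exponential family of cut inequalities $x_i(\delta^-(Z))\ge 1$ for $Z\in\cF'$; this family admits a standard minimum-cut separation oracle, so the ellipsoid method locates an optimum in polynomial time. The main obstacle is that this argument does \emph{not} recover an optimal primal deviation vector $p$: the primal LP has exponentially many $y_i(Z)$ variables, and converting the dual optimum into a combinatorial recipe for $p$ is precisely the open question flagged in the introduction, which is why the theorem claims polynomial-time computation only of the optimal fractional covers rather than of $p$ itself.
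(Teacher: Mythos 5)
Your derivation of the min-max identity follows essentially the same route as the paper: mildly adequate deviation vectors via Remark~\ref{rem:adequate}, the primal LP built from Fulkerson's theorem and \eqref{opt_a}, the substitution $x_i(a):=1-\widehat{x}_i(a)$ on $F$, and the observation that the shared variable $p(a)$ produces the coupling constraint $\sum_{i\in[k]}x_i(a)\ge k-1$. (The paper writes the $F$-constraints as inequalities $\sum_{Z}y_i(Z)+p(a)\ge w_i(a)$ rather than your equalities; the two are equivalent since the free singleton variables $y_i(\{v\})$ can absorb any slack on the unique $F$-arc entering $v$. With your equality version the dual variables $\widehat{x}_i(a)$ for $a\in F$ are free, so ``nonnegativity of $x_i$ is preserved'' is not automatic; it follows, but only after combining $x_i(a)\le 1$ -- forced by the vertex equality at the head of $a$ -- with the coupling constraint. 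This is a minor point worth a sentence.)

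The genuine gap is in the polynomial-time claim. You assert that the cut inequalities $\sum_{a\in\delta^-(Z)}x_i(a)\ge 1$ for $Z\in\mathcal{F}'$ ``admit a standard minimum-cut separation oracle,'' but they do not: the family $\mathcal{F}'$ consists only of sets $Z$ with $|Z|>1$ \emph{and} $d^-_F(Z)=1$, and minimizing $x_i(\delta^-(Z))$ subject to the side condition $d^-_F(Z)=1$ is not a min-cut computation. Nor can you simply separate over all $Z\subseteq V-r$: sets with $d^-_F(Z)\ge 2$ carry no constraint, and a feasible dual solution may well have $x_i(\delta^-(Z))<1$ on such a set (e.g.\ two arborescence arcs leaving $r$ into a strongly connected pair of vertices), so an unrestricted min-cut oracle would report spurious violations. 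The paper closes exactly this gap with Claim~\ref{claim:for_separation2}: feasibility of $x_i$ for the constraints indexed by $\mathcal{F}'$ together with the vertex equalities is equivalent to $\widetilde{x}_i:=x_i+\chi_F$ satisfying $\widetilde{x}_i(\delta^-(Z))\ge 2$ for \emph{all} $Z\subseteq V-r$ with $|Z|>1$ and $\widetilde{x}_i(\delta^-(v))=2$ for all $v\in V-r$, which can be tested by computing, for each $v$, a maximum $r$-$v$ flow under capacities $x_i+\chi_F$ and checking that its value is at least $2$. Without this (or an equivalent) reduction, the ellipsoid-based argument for computing the optimal fractional covers is not justified. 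Your closing observation -- that this yields the dual optimum but not an efficient construction of $p$ itself -- is correct and matches the paper's open problem.
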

\begin{proof}
The LP formulation of the inverse arborescence problem with multiple weight functions can be obtained analogously to the case of a single weight function.
\begin{equation} \label{eq:arborescencePrimal_multiple}
 \begin{array}{rr@{}c@{}lll}
 \min & \multicolumn{2}{l}{\displaystyle\sum_{a \in A} \, p(a)} \\[15pt]
  \text{s.\,t.} & \displaystyle \sum_{\substack{Z \in \mathcal{F}: \\ a\in\delta^-(Z)}} y_i(Z) + p(a) & \; \ge \; & w_i(a) & \qquad \forall a \in F,  ~\forall i \in [k],  \\[25pt]
    & \displaystyle \sum_{\substack{Z \in \mathcal{F}: \\ a\in\delta^-(Z)}} y_i(Z) & \; \le \; & w_i(a) & \qquad \forall a \in A-F,~ \forall i \in [k], \\[25pt]
                & y_i(Z) & \; \ge \; & 0 & \qquad \forall Z \in \mathcal{F}', ~ \forall i \in [k],  \\[10pt]
                & p(a) & \; \ge \; & 0 & \qquad \forall a \in F .
 \end{array}
\end{equation}
The dual program is equivalent to
\begin{equation} \label{eq:arborescenceDual_multiple}
 \begin{array}{rll@{}c@{}rlll}
  \max & \multicolumn{5}{l}{\displaystyle \sum_{i \in [k]} w_i(F) - \sum_{i \in [k]} \sum_{a \in A} \, w_i(a) \, x_i(a)} & \\[20pt]
  \text{s.\,t.} & \displaystyle \sum_{a\in\delta^-(Z)} & x_i(a) & \; \ge \; & 1 & & ~ \forall Z \in \mathcal{F}', ~ \forall i \in [k], \\[25pt]
  & \displaystyle \sum_{a\in\delta^-(v)} &x_i(a) & \; = \; & 1 & & ~ \forall v \in V-r, ~ \forall i \in [k], \\[25pt]
   & ~~\displaystyle \sum_{i \in [k]} & x_i(a) & \; \ge \; & \displaystyle k-1 & & ~ \forall a \in F, & \\[10pt]
   & & x_i(a) & \; \ge \; & 0 & & ~ \forall a \in A, ~\forall i \in [k].
 \end{array}
\end{equation}
That is, $x_1,\ldots,x_k$ correspond to minimal fractional covers of $\mathcal{F}$ covering each $a\in F$ at least $k-1$ times in total, where the minimality follows from the equality constraints on the vertices. Therefore $\min=\max$ follows by strong duality.

Unfortunately, Problems~\eqref{eq:arborescencePrimal_multiple} and \eqref{eq:arborescenceDual_multiple} have exponential numbers of variables and constraints, respectively. However, the dual program can be solved in polynomial time as the separation problem is solvable.
Indeed, given $x_1,\dots,x_k$, the dual constraints are obvious to check, except for \[\sum_{a\in\delta^-(Z)} x_i(a) \ge 1\qquad \forall Z \in \mathcal{F}', ~ \forall i \in [k].\] The next claim helps us to verify such constraints.

 \begin{claim} \label{claim:for_separation2}
  A vector $x\in\mathbb{R}^A_+$ is a feasible solution of 
  \begin{eqnarray}
  \sum_{a\in\delta^-(Z)} x(a) &\ge 1 \hspace{45pt} &\forall Z\in\mathcal{F}',\label{eq:arborescence_DualEquivalence1a}\\
  \sum_{a\in\delta^-(v)} x(a) &= 1 \hspace{45pt} &\forall v\in V-r,\label{eq:arborescence_DualEquivalence2a}
  \end{eqnarray}
  if and only if
  \begin{eqnarray}
   \sum_{a\in\delta^-(Z)} \widetilde{x}(a) &\ge 2 \hspace{45pt} &\forall Z \subseteq V-r, |Z|>1,  \label{eq:arborescence_DualEquivalence1b}\\
   \sum_{ a\in\delta^-(v)} \widetilde{x}(a) &= 2 \hspace{45pt} &\forall v \in V-r \label{eq:arborescence_DualEquivalence2b}
  \end{eqnarray}
  hold for $\widetilde{x} := x + \chi_F$, where $\chi_F$ is the characteristic vector of the input $r$-arborescence $F$.
 \end{claim}
 \begin{proof}
  First, assume that $x\in\mathbb{R}^A_+$ satisfies \eqref{eq:arborescence_DualEquivalence1a} and \eqref{eq:arborescence_DualEquivalence2a}. Take an arbitrary set $Z \subseteq V-r$ with $|Z|>1$. If $Z \in \mathcal{F}'$, then by the definition of $\mathcal{F}'$,
  \[ \sum_{\substack{a \in A : \\ a\in\delta^-(Z)}} \widetilde{x}_i(a) = \sum_{\substack{a \in A : \\ a\in\delta^-(Z)}} x_i(a) + 1 \ge 1 + 1 = 2. \]
 If $Z \notin \mathcal{F}'$, then $d^-_F(Z) \ge 2$ since $F$ is an $r$-arborescence, thus \eqref{eq:arborescence_DualEquivalence1b} clearly holds.
 Finally, if $Z=\{v\}$ for some $v\in V-r$, then \eqref{eq:arborescence_DualEquivalence2b} holds by \eqref{eq:arborescence_DualEquivalence2a} and  $d^-_F(v)=1$.
  
  To see the reverse implication, assume that \eqref{eq:arborescence_DualEquivalence1b} and \eqref{eq:arborescence_DualEquivalence2b} hold for $\widetilde{x}$. By definition, $d^-_F(Z)=1$ for any $Z \in \mathcal{F}'$ and $d^-_F(v)=1$ for any $v\in V-r$, implying both \eqref{eq:arborescence_DualEquivalence1a} and \eqref{eq:arborescence_DualEquivalence2a}.
 \end{proof}
 
By Claim~\ref{claim:for_separation2}, checking the remaining constraints for $x_i$ is equivalent to deciding whether there exists an $r$-$v$ flow of value $2$ in $D$ with capacity function $x_i+\chi_F$ for each $v \in V-r$, which can be checked in polynomial time. This finishes the proof of the theorem.
\end{proof}

It remains an interesting open problem whether an optimal deviation vector $p$ can be determined efficiently in the arborescence case.

\begin{rem}
Problems~\eqref{eq:arborescencePrimal_multiple} and \eqref{eq:arborescenceDual_multiple} do not necessarily have integer optimal solutions, not even if the weight functions $w_1,\dots,w_k$ are integer-valued; for an example, see Figure~\ref{fig:arborescenceNoInt}.
\end{rem}

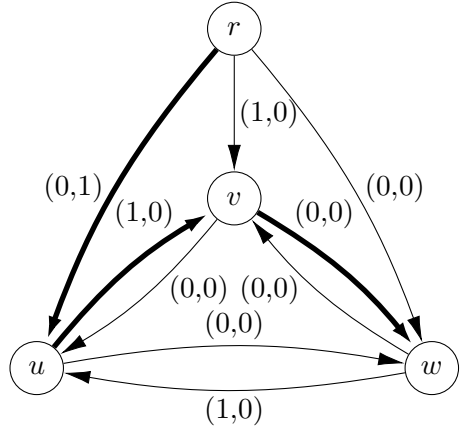
\begin{figure}[t]
\begin{center}
 
 

\begin{tikzpicture}[scale=1.5,>={Latex[length=10pt,width=5pt]}]
 \tikzstyle{vertex}=[draw,circle,minimum size=20,inner sep=1]
 
 \node[vertex] (v) at (0,0.5) {$v$};
 \node[vertex] (r) at (90+0*120:2) {$r$};
 \node[vertex] (u) at (90+1*120:2) {$u$};
 \node[vertex] (w) at (90+2*120:2) {$w$};
 
 \draw[->,line width=2pt] (r) to [bend right=10] node[pos=0.5,shift={(-16pt,0pt)}] {$(0{,}1)$} (u);
 \draw[->] (r) -- (v) node[pos=0.5,shift={(13pt,0pt)}] {$(1{,}0)$};
 \draw[->] (r) to [bend left=10] node[pos=0.5,shift={(16pt,0pt)}] {$(0{,}0)$} (w);
 \draw[->,line width=2pt] (u) to [bend left=10] node[pos=0.5,shift={(7pt,22pt)}] {$(1{,}0)$} (v);
 \draw[->] (u) to [bend left=10] node[pos=0.5,shift={(0pt,8pt)}] {$(0{,}0)$} (w);
 \draw[->] (v) to [bend left=10] node[pos=0.5,shift={(20pt,2pt)}] {$(0{,}0)$} (u);
 \draw[->,line width=2pt] (v) to [bend left=10] node[pos=0.5,shift={(-7pt,22pt)}] {$(0{,}0)$} (w);
 \draw[->] (w) to [bend left=10] node[pos=0.5,shift={(0pt,-8pt)}] {$(1{,}0)$} (u);
 \draw[->] (w) to [bend left=10] node[pos=0.5,shift={(-20pt,2pt)}] {$(0{,}0)$} (v);
\end{tikzpicture}
\caption{An example showing that an integer-valued optimal deviation vector does not necessarily exist for more than one weight functions. Thick arcs denote the input $r$-arborescence $F$, while the numbers denote the weights $w_1$ and $w_2$ of the arcs. One can check that $\| p \|_1 \ge 2$ holds for any feasible integer deviation vector $p$. However, setting $p^*(ru) := 0.5$, $p^*(uv) := 0.5$, $p^*(vw) := 0.5$ and $p^*(a):=0$ for all the other arcs $a$ results in a feasible deviation vector with $\| p^* \|_1 = 1.5$.}
\label{fig:arborescenceNoInt}
\end{center}
\end{figure}




\paragraph{Acknowledgement.} The authors would like to thank Andr\'{a}s Frank for discussions on inverse arborescence problems. Krist\'of B\'erczi was supported by the J\'anos Bolyai Research Fellowship of the Hungarian Academy of Sciences. The authors were supported by the Lend\"ulet Programme of the Hungarian Academy of Sciences -- grant number LP2021-1/2021, by the Hungarian National Research, Development and Innovation Office -- NKFIH, grant numbers FK128673 and K124171, and by the Thematic Excellence Programme -- TKP2020-NKA-06 (National Challenges Subprogramme).

\bibliographystyle{abbrv}
\bibliography{inverse}

\end{document}